\newtheorem{thm}{Theorem}[section]
\newtheorem{lem}[thm]{Lemma}
\newtheorem{prop}[thm]{Proposition}
\newtheorem{rem}[thm]{Remark}
\theoremstyle{definition}
\newcommand{\x}{\mathbf{x}}
\newcommand{\la}{\lambda}
\newcommand{\A}{\mathcal{A}}
\DeclareMathOperator{\ASM}{ASM}
\DeclareMathOperator{\TSPP}{TSPP}
\DeclareMathOperator{\CSSPP}{CSSPP}
\DeclareMathOperator{\sgn}{sgn}
\DeclareMathOperator{\diag}{diag}
\DeclareMathOperator{\inv}{inv}
\DeclareMathOperator{\asym}{\mathbf{ASym}}
\newcommand{\red}[1]{{\color{red}{#1}}}
\newcommand{\cyan}[1]{{\color{cyan}{#1}}}
\newcommand*\pFq[6][8]{%
	\begingroup 
	\pFqmuskip=#1mu\relax
	\mathchardef\normalcomma=\mathcode`,
	\mathcode`\,=\string"8000
	\begingroup\lccode`\~=`\,
	\lowercase{\endgroup\let~}\pFqcomma
	{}_{#2}F_{#3}{\left[\left.\genfrac..{0pt}{}{#4}{#5}\right|#6\right]}%
	\endgroup
}
\newcommand{\pFqcomma}{{\normalcomma}\mskip\pFqmuskip}
\numberwithin{equation}{section}
\title[Alternating sign matrices and totally symmetric plane partitions]{Alternating sign matrices and totally symmetric plane partitions}
\author[F. Aigner]{Florian Aigner}
\address{Fakult\"at f\"ur Mathematik, Universit\"at Wien, Austria}
\email{florian.aigner@univie.ac.at}
\urladdr{https://homepage.univie.ac.at/florian.aigner/}
\author[I. Fischer]{Ilse Fischer}
\address{Fakult\"at f\"ur Mathematik, Universit\"at Wien, Austria}
\email{ilse.fischer@univie.ac.at}
\urladdr{https://www.mat.univie.ac.at/$\sim$ifischer/}
\thanks{The first author acknowledges support from the Austrian Science Foundation FWF: J 4387 and SFB grant F50 and by the project ‘‘Austria/France Scientific \& Technological Cooperation’’ (BMWFW Project No. FR 10/2018 and PHC Amadeus 2018 Project No. 39444WJ). The second author acknowledges support from the Austrian Science Foundation FWF: SFB grant F50 and grant P34931.}
\keywords{alternating sign matrices, column strict shifted plane partitions, totally symmetric plane partitions, Schur polynomials, Catalan numbers}
\begin{document}

\begin{abstract}
We introduce a new family $\mathcal{A}_{n,k}$ of Schur positive symmetric functions, which are defined as sums over totally symmetric plane partitions. In the first part, we show that, for $k=1$, this family is equal to a multivariate generating function involving $n+3$ variables of objects that extend alternating sign matrices (ASMs), which have recently been introduced by the authors. This establishes a new connection between ASMs and a class of plane partitions, thereby complementing the fact that ASMs are equinumerous with totally symmetric self-complementary plane partitions as well as with descending plane partitions. The proof is based on a new antisymmetrizer-to-determinant formula for which we also provide a bijective proof.
In the second part, we relate three specialisation of $\mathcal{A}_{n,k}$ to a weighted enumeration of certain well-known classes of column strict shifted plane partitions that generalise descending plane partitions.
\end{abstract}

\maketitle


\section{Introduction}
\emph{Plane partitions} were first studied by MacMahon \cite{MacMahon97} at the end of the 19th century, however found broader interest in the combinatorial community starting in the second half of the last century. \emph{Alternating sign matrices} (ASMs) on the other hand were introduced by Robbins and Rumsey \cite{RobbinsRumsey86} in the early 1980s. Together with Mills \cite{MillsRobbinsRumsey82}, they conjectured that the number of $n \times n$ ASMs is given by $\prod_{i=0}^{n-1} \frac{(3i+1)!}{(n+i)!}$. Stanley then pointed out that these numbers had appeared before in the work of Andrews \cite{Andrews79} as the enumeration formula for a certain class of plane partitions, called \emph{descending plane partitions} (DPPs). Soon after that Mills, Robbins and Rumsey \cite{MillsRobbinsRumsey86} observed (conjecturally) that this formula also counts another class of plane partitions, namely \emph{totally symmetric self-complementary plane partitions} (TSSCPPs). Although these conjectures have all been proved since then, see among others \cite{Andrews94,Zeilberger96}, it is mostly agreed that there is no good combinatorial understanding of this relation between ASMs and certain classes of plane partitions since we lack transparent combinatorial proofs of these results. However, Konvalinka and the second author \cite{cube,FischerKonvalinka20plus} have recently established complicated bijective proofs (involving a generalisation of the involution principle) for an identity that implies the equinumerosity of ASMs and DPPs as well as for the product formula.

One purpose of this paper is to relate ASMs to yet another class of plane partitions, namely \emph{totally symmetric plane partitions} (TSPPs), in a new way. This relation is via a certain Schur polynomial expansion. 
Other known relations between ASMs and TSPPs are the fact that the number of symmetric plane partitions inside an $(n,n,n-1)$-box is the product of the number of TSPPs inside an $(n-1,n-1,n-1)$-box and the number of ASMs of size $n$, see \cite{Fischer05}, and via posets, see \cite[Section 8]{Striker11}.

The following symmetric functions are studied in this paper
\[
\A_{n,k}(r,u,v,w; \x) := \sum_{T \in \TSPP_{n-1}} \omega(T) s_{\pi_k(T)}(\x),
\]
where the sum is over totally symmetric plane partitions inside an $(n-1,n-1,n-1)$-box, $\pi_k(T)$ is a slight modification of the diagonal of $T$ and $\omega(T)$ is a monomial in $r,u,v,w$ that depends on the parameters in the Frobenius notation of $\pi_0(T)$.
All notations in the introduction are defined in the following sections. For $n=3$, the function $\A_{3,k}$ is a sum over all TSPPs inside a $(2,2,2)$-box, see Figure \ref{fig: Example for Ank}, and it is equal to
\begin{multline*}
\A_{3,k}(r,u,v,w,\x)= v^3 + r  u v^2 s_{(1^{k+1})}(\x) + r  uv w s_{(1^{k+2})}(\x) + r  u^2 v  s_{(2,1^{k+1})}(\x) + r^2u^3 s_{(2^{k+2})}(\x).
\end{multline*}

\begin{figure}[h]
\begin{center}
	\begin{tikzpicture}
	\begin{scope}[scale=0.42]
		\node at (-1.5,0) {$T$:};
		\node at (2,0) {$\emptyset$};
		\begin{scope}[xshift=6cm]
			\PlanePartition{{1,0},{0,0}}
		\end{scope}
		\begin{scope}[xshift=12cm]
			\PlanePartition{{2,1},{1,0}}
		\end{scope}
		\begin{scope}[xshift=18cm]
			\PlanePartition{{2,2},{2,1}}
		\end{scope}
		\begin{scope}[xshift=24cm]
			\PlanePartition{{2,2},{2,2}}
		\end{scope}
	\end{scope}
	\begin{scope}[scale=0.42, yshift=-6cm,]
		\node at (-1.5,2) {$\pi_k(T)$:};
		\node at (2,2) {$\emptyset$};
		\node at (6,2) {$(1^{k+1})$};
		\node at (12,2) {$(1^{k+2})$};
		\node at (18,2) {$(2,1^{k+1})$};
		\node at (24,2) {$(2^{k+2})$};
	\end{scope}
	\begin{scope}[scale=0.42, yshift=-7cm]
		\node at (-1.5,0) {$\omega(T)$:};
		\node at (2,0) {$ v^3$};
		\node at (6,0) {$r  u v^2$};
		\node at (12,0) {$r  u v w$};
		\node at (18,0) {$r  u^2 v$};
		\node at (24,0) {$r^2 u^3$};
	\end{scope}
	\end{tikzpicture}
\end{center}
\caption{\label{fig: Example for Ank} Totally symmetric plane partitions inside a $(2,2,2)$-box, their associated weight $\omega(T)$ and the partitions $\pi_k(T)$.}
\end{figure}

Our first main result states that in the special case $k=1$, the above functions give the Schur polynomial expansion of a weighted generating function for ASMs, which has recently been introduced by the authors in \cite{AignerFischer2106.11568}.

\begin{thm}
\label{thm: main thm 1}
For all positive integers $n$, the weighted generating function for ASMs with respect to the weight $\omega_A$ is equal to
\begin{equation}
\label{eq: main thm 1}
\sum_{A \in \ASM_n} \omega_A(u,v,w;\x) = \A_{n,1}(1, u,v,w;\x).
\end{equation}
\end{thm}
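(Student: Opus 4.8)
The plan is to prove the identity by rewriting both sides as specialisations of a common antisymmetrized expression in the variables $\x=(x_1,\dots,x_n)$ and then matching them. On the right-hand side, the appearance of the Schur polynomials $s_{\pi_1(T)}(\x)$ strongly suggests passing through the bialternant (Weyl character) formula $s_\la(\x)=\asym_\x[\x^{\la+\delta}]/\asym_\x[\x^\delta]$, so my first step is to clear the Vandermonde denominator and interpret $\A_{n,1}(1,u,v,w;\x)\cdot\asym_\x[\x^\delta]$ as a single antisymmetrized polynomial $\asym_\x[P(u,v,w;\x)]$, whose monomials record, for each TSPP $T$, the weight $\omega(T)$ and the shifted diagonal $\pi_1(T)+\delta$. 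Since TSPPs inside an $(n-1,n-1,n-1)$-box carry the full $S_3$ symmetry, their diagonals $\pi_0(T)$ are self-conjugate, and I would first pin down exactly how $\pi_1$ modifies $\pi_0$ in Frobenius coordinates so that the exponent vectors $\pi_1(T)+\delta$ range over a transparent set of strictly decreasing integer sequences.

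For the left-hand side I would invoke the generating-function description of $\sum_{A}\omega_A$ from the authors' earlier work \cite{AignerFischer2106.11568}, which I expect to be available either as an operator formula applied to a Vandermonde-type product or directly as a constant-term/antisymmetrizer expression in $\x$ together with the auxiliary parameters $u,v,w$. The goal of this step is to bring the ASM side into the shape $\asym_\x[Q(u,v,w;\x)]$ for an explicit product $Q$, so that proving \eqref{eq: main thm 1} reduces to the polynomial identity $\asym_\x[Q]=\asym_\x[P]$, i.e. to checking that $Q$ and $P$ have the same antisymmetrization.

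The central tool is the antisymmetrizer-to-determinant formula announced in the abstract: it should express $\asym_\x[Q]$ as a single determinant $\det(M_{i,j})$ whose entries are univariate generating functions built from $u,v,w$ and $x_i$. My plan is to apply this formula to $Q$, expand the determinant by multilinearity, and reorganise the resulting signed sum. Grouping terms by their exponent multiset $\{\mu_1>\dots>\mu_n\}$ turns each block into $\asym_\x[\x^{\mu}]=\asym_\x[\x^{\la+\delta}]$, and dividing by $\asym_\x[\x^\delta]$ collects the output as $\sum_\la d_\la\,s_\la(\x)$; a Lindström--Gessel--Viennot non-intersecting-path interpretation (or an explicit sign-reversing involution) should show that the surviving contributions are cancellation-free and that $d_\la=\sum_{T:\,\pi_1(T)=\la}\omega(T)$.

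The hard part will be this last combinatorial identification: matching the index set of the expanded determinant with $\TSPP_{n-1}$ while simultaneously verifying that the product of entry-weights collapses to precisely the monomial $\omega(T)$ in $r,u,v,w$ and that the associated exponent vector is $\pi_1(T)+\delta$. Getting the Frobenius bookkeeping of the $S_3$-symmetric diagonal to align with the three parameters $u,v,w$ (the $r$-dependence being trivial at $r=1$) is where the self-conjugacy of $\pi_0(T)$ must be used essentially, and I expect this to be the crux. Establishing the antisymmetrizer-to-determinant formula itself---ideally by the promised bijective argument rather than by manipulation of signed sums---is the second substantial obstacle, since its correctness is what guarantees that the determinant expansion admits a cancellation-free, TSPP-indexed interpretation.
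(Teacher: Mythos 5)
Your steps up to the Schur expansion coincide with the paper's: the operator formula from \cite{AignerFischer2106.11568} is rewritten via the bialternant formula as an antisymmetrizer, Lemma \ref{lem: general} is applied with $X_i=-w-ux_i$, $Y_i=vx_i^{-1}$ to produce a determinant $\det_{i,j}(x_i^{n-j}p_j(x_i))/\prod(x_i-x_j)$, and multilinearity in the columns turns this into $\sum_\lambda c_{n,\lambda}\,s_\lambda(\x)$ with $c_{n,\lambda}$ an explicit $n\times n$ determinant. The genuine gap is your final step. The entries of that determinant are the signed sums $\sum_{k}(-1)^{k}\binom{k}{\lambda_i+j-i-k}u^{\lambda_i+j-i-k}v^{j-k-1}w^{2k+i-\lambda_i-j}$, and you assert that an LGV interpretation or a sign-reversing involution ``should'' show the expansion is cancellation-free and equal to $\sum_{T:\pi_1(T)=\lambda}\omega(T)$. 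No such path model or involution is exhibited, and none is evident: this identification is precisely the content of the theorem, so leaving it as an expectation leaves the proof incomplete. The paper does something different and more elementary at this point: it expands $c_{n,\lambda}$ along its first column (which forces $\lambda_{l+1}=l$ where $l$ is the Durfee length), uses Pascal's rule to write each remaining entry as $-w\,m_{i,j}(\lambda_i)-u\,m_{i,j}(\lambda_i-1)$ or $v\,m_{i,j}(\lambda_{i+1})$, and thereby derives a recursion in $n$ for $c_{n,\lambda}$ in Frobenius coordinates; the same recursion for $d_{n,\lambda}=\omega\cdot\det\bigl(\binom{b_j-1}{a_i}\bigr)$ follows from Pascal's rule inside the binomial determinant of Proposition \ref{prop: TSSPP refinement}, and induction on $n$ finishes the proof. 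If you want to keep your direct-identification route, you must actually construct the cancellation argument, which is a substantial open task rather than a routine verification.

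A second, smaller but load-bearing error: the diagonals $\pi_0(T)$ of TSPPs are \emph{not} self-conjugate. For instance, for $n=3$ the partition $\pi_0(T)=(0|1)=(1,1)$ occurs, and $(1,1)'=(2)$. The correct property, recorded in the paper, is that $\pi_0(T)$ has Frobenius notation $(a_1,\ldots,a_l\mid b_1,\ldots,b_l)$ with $a_i\le b_i$ (it is a $0$-tall partition), not $a_i=b_i$. Since you state that ``the self-conjugacy of $\pi_0(T)$ must be used essentially'' in your Frobenius bookkeeping, this misidentification would derail the intended matching of exponent vectors; the bookkeeping should instead be organised around the inequality $a_i\le b_i$ and the shift $b_i\mapsto b_i+1$ defining $\pi_1$.
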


Our proof of this result is (mostly) non-combinatorially, and thus it adds another problem to the growing zoo of (obviously challenging) bijective proof problems related to ASMs and plane partitions. More specifically, it suggests that there is a bijection between the down-arrowed monotone triangles\footnote{These are certain decorated monotone triangles. Monotone triangle are in easy bijective correspondence with ASMs.} from \cite{AignerFischer2106.11568}, and pairs of totally symmetric plane partitions and semistandard Young tableaux. Moreover, \eqref{eq: main thm 1} involves $n+4$ parameters, and, therefore, we even have a considerable number of equidistributed statistics that could help in finding such a bijection.

\medskip

In the second part of our paper, we consider the case of general $k$ and connect the family $\A_{n,k}$ of symmetric functions to another family of plane partitions, namely \emph{column strict shifted plane partitions} (CSSPPs) of class $k$. CSSPPs of class $k$ form a family of plane partitions, generalising cyclically symmetric plane partitions (CSPPs) and DPPs in the sense that they are in bijection to CSPPs for $k=0$ and to DPPs for $k=2$. 
Let $\CSSPP_{n,k}(r,t)$ denote a certain generating function of CSSPPs of class $k$ with at most $n$ entries in the first row; for the definitions see Section \ref{sec: CSSPPs}. Then our second main theorem states the following.

\begin{thm}
\label{thm: main thm 2}
Let $n$ be a positive integer and let $\mathbf{1}=(1, \ldots, 1)$. Then,
\begin{align}
\label{eq: Ank CSSPP 1}
\A_{n+1,0}(r,1,1,t;\mathbf{1})&=\CSSPP_{n,0}(r,t+2),\\
\label{eq: Ank CSSPP 2}
\A_{n+1,k}(r,1,1,-1;\mathbf{1})&=\CSSPP_{n,2k}(r,1),\\
\label{eq: Ank CSSPP 3}
\A_{n+1,k}(r,1,1,0;\mathbf{1})&=\CSSPP_{n,k}(r,2).
\end{align}
\end{thm}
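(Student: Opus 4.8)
The plan is to prove Theorem~\ref{thm: main thm 2} by exploiting the defining formula $\A_{n+1,k}(r,u,v,w;\x)=\sum_{T\in\TSPP_n}\omega(T)\,s_{\pi_k(T)}(\x)$ and carefully carrying out the stated specialisations. Since each display sets $u=v=1$ and evaluates the Schur polynomials at $\x=\mathbf{1}$, the first observation is that $s_{\pi_k(T)}(\mathbf{1})$ becomes a product-formula counting of semistandard Young tableaux of shape $\pi_k(T)$ (by the hook-content formula, with the number of variables fixed by the number of parts of $\pi_k(T)$). Thus on the left-hand side we obtain, for each $T\in\TSPP_n$, a monomial in $r$ and $w$ times an explicit polynomial factor coming from $s_{\pi_k(T)}(\mathbf{1})$. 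The right-hand side, $\CSSPP_{n,c}(r,t)$, is the generating function of column strict shifted plane partitions of the appropriate class defined in Section~\ref{sec: CSSPPs}. The heart of the proof will therefore be a \emph{weight-preserving bijection}, or at least a generating-function identity, between $\TSPP_n$ (with the specialised weight) and CSSPPs of the relevant class.

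\medskip

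First I would unpack the three cases separately, since they correspond to three different classes ($k=0$ giving CSPPs, general $k$ with $w=-1$ giving class $2k$, and general $k$ with $w=0$ giving class $k$). For \eqref{eq: Ank CSSPP 1} I would set $u=v=1$, $w=t$ and recall that CSSPPs of class $0$ are cyclically symmetric plane partitions; here the Schur evaluation $s_{\pi_0(T)}(\mathbf{1})$ should telescope into the extra shift $t+2$ on the CSSPP side. For \eqref{eq: Ank CSSPP 2} and \eqref{eq: Ank CSSPP 3}, the specialisations $w=-1$ and $w=0$ are designed to kill or simplify the $w$-dependence of $\omega(T)$: with $w=-1$ a sign-reversing cancellation should collapse the TSPP sum onto those $T$ whose Frobenius data matches class-$2k$ CSSPPs evaluated at $t=1$, while $w=0$ annihilates all terms carrying a positive power of $w$, leaving a subfamily that matches class-$k$ CSSPPs at $t=2$. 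The natural technical tool is to rewrite the specialised $\A_{n+1,k}$ as a sum over the main diagonals $\pi_0(T)$ of TSPPs, using the Frobenius coordinates of $\pi_0(T)$ to encode both the monomial $\omega(T)$ and the shifted shape $\pi_k(T)$, and then to compare this with the known Frobenius-type or Gelfand--Tsetlin description of CSSPPs of class $c$.

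\medskip

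The key algebraic step is the passage from the Schur-polynomial side to a product over the parts of the diagonal. Writing $\pi_0(T)$ in Frobenius notation $(a_1,\dots,a_d\mid b_1,\dots,b_d)$, the modification $\pi_k(T)$ shifts the leg lengths by a multiple of $k$, and $s_{\pi_k(T)}(\mathbf{1})$ evaluated in the correct number of variables factors via the hook-content formula into a product whose $(t+2)$-, $1$-, or $2$-dependence matches the right-hand side once the monomial $\omega(T)$ is absorbed. I would therefore reduce each identity to a bijection between TSPPs inside the $(n,n,n)$-box and CSSPPs with at most $n$ entries in the first row that sends the self-conjugate main diagonal of a TSPP to the shifted-shape profile of a CSSPP, checking that the weight $\omega(T)\,s_{\pi_k(T)}(\mathbf{1})$ matches the CSSPP statistic recorded by $(r,t)$. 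Such a correspondence is plausible because TSPPs are determined by their ``fundamental domain'', which is itself naturally a shifted plane partition.

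\medskip

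I expect the main obstacle to be the precise matching of weights, in particular verifying that the monomial $\omega(T)$ in $(r,w)$ combines with the hook-content evaluation of $s_{\pi_k(T)}(\mathbf{1})$ to reproduce \emph{exactly} the $(r,t+2)$, $(r,1)$, and $(r,2)$ dependence prescribed by $\CSSPP_{n,c}(r,t)$. The sign cancellation in the $w=-1$ case \eqref{eq: Ank CSSPP 2} is delicate: one must show that the surviving terms are indexed by the correct class-$2k$ shapes and that no spurious contributions remain, which likely requires an inclusion--exclusion or a sign-reversing involution on the TSPPs whose Frobenius data carry a factor of $w$. Establishing the clean correspondence between the diagonal data $\pi_0(T)$ and the shifted-shape bookkeeping of CSSPPs of class $c$ — and confirming that the three choices of $(w,t)$ interpolate correctly between the CSPP case $c=0$ and the general CSSPP cases — is where the real work lies.
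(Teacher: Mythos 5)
Your proposal is a plan rather than a proof, and the plan omits the two ideas that actually make the argument work. The paper does not attempt a term-by-term or bijective matching between the TSPP sum and CSSPPs. Instead it first collapses $\A_{n,k}(r,u,v,w;\x)$ into a single determinant $\det_{0\le i,j\le n-2}\bigl((-1)^{j-i}v^{j+1}\binom{i}{j}+ru^{i+1}w^{j-i}s_{(i|j+k)}(\x)\bigr)$ (Lemma \ref{lem: Ank via det}), which requires the Giambelli identity together with the complementation identity \eqref{eq: det of complement}; only then is the hook-content formula applied, entrywise, to obtain \eqref{eq: Ank x=1}. Each of the three identities is then proved by exhibiting an explicit matrix identity of the form $M_1A=BM_2$, in which $M_1,M_2$ are triangular with matching determinants, $\det A$ is the specialised $\A_{n+1,k}$, and $\det B$ is the relevant $\CSSPP_{n,c}(r,t)$ from Proposition \ref{prop: CSSPP gen fct}; the entrywise verification rests on a ${}_3F_2$ transformation (Lemma \ref{lem: binomial identity}). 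None of this machinery appears in your outline, and the steps you defer --- ``the Schur evaluation should telescope into the extra shift $t+2$'', ``a sign-reversing cancellation should collapse the TSPP sum onto class-$2k$ shapes'' --- are precisely where the content lies.

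Moreover, the route you propose is very unlikely to succeed as described. At $(u,v,w)=(1,1,-1)$ the left-hand side of \eqref{eq: Ank CSSPP 2} is a signed sum over pairs consisting of a TSPP and a semistandard tableau, and there is no reason for individual TSPPs to correspond to individual CSSPPs: the equality only emerges after the determinantal repackaging and the triangular conjugation, i.e., after nontrivial linear combinations of rows and columns. Indeed, for $k=1$ your proposed ``sign-reversing involution plus direct correspondence'' would amount to a transparent combinatorial proof of an ASM--DPP-type equinumerosity, which the introduction of the paper explicitly flags as a long-standing open combinatorial problem. The one concrete observation you do make --- that $w=0$ kills all terms with $\sum_i(b_i-a_i)>0$ --- is correct, but even there the paper handles the specialisation by a Leibniz expansion of the determinant \eqref{eq: Ank x=1} (restricting to permutations that fix the complement of $I$ pointwise), not by pruning the TSPP sum, and the remaining comparison with $\CSSPP_{n,k}(r,2)$ still needs the binomial-transformation lemma. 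As it stands the proposal does not constitute a proof.
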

For $k=1$, the identity \eqref{eq: Ank CSSPP 2} is closely related a special case of \cite[Theorem 2.5]{AignerFischer2106.11568}. The choice of the parameters $(u,v,w)=(1,1,-1)$ in 
\eqref{eq: Ank CSSPP 2} corresponds to the straight enumeration of ASMs, while the choice $(u,v,w)=(1,1,0)$ in 
\eqref{eq: Ank CSSPP 3} corresponds to the $2$-enumeration of ASMs, which is related to the straight enumeration of the Aztec diamond.
\bigskip

The structure of the paper is as follows. In Section~\ref{sec: TSPPs}, we  recall some basics of plane partitions and introduce the family $\A_{n,k}$ of symmetric functions in detail. In Section~\ref{sec: ASM gen fct}, we provide the definition of the symmetric generating function for ASMs and relate the weight for ASMs to the six-vertex model. Section~\ref{sec: antisym to det} contains Lemma  \ref{lem: general}, which allows us to express certain antisymmetrisers as determinants. We provide two proofs of this lemma: one is using linear algebra, and the second is combinatorial in nature and based on directed graphs. Section~\ref{sec: proof of mt 1} contains the proof of Theorem~\ref{thm: main thm 1}. In Section~\ref{sec: CSSPPs}, we recall CSSPPs and provide in Lemma~\ref{lem: Ank via det} a determinantal description of $\A_{n,k}$ closely related to the Giambelli identity for Schur functions. The proof of Theorem~\ref{thm: main thm 2} is presented in Section~\ref{sec: proof of mt 2}.\bigskip

An extended abstract containing parts of Section~\ref{sec: TSPPs}--\ref{sec: proof of mt 1} was published in the proceedings of FPSAC 2020 \cite{AignerFischerKonvalinkaNadeauTewari20}.


\section{A family of symmetric functions related to TSPPs}
\label{sec: TSPPs}

A \emph{partition} $\la=(\la_1,\ldots,\la_n)$ is a weakly decreasing sequence of non-negative integers (we deviate from the more usual definition where parts have to be positive). We identify a partition $\la$ with its \emph{Young diagram}, which is a collection of left-justified boxes with $\la_i$ boxes in the $i$-th row from bottom (using French notation). The \emph{conjugate} $\la^\prime$ of $\la$ is the partition obtained by reflecting the Young diagram along the $y=x$ axis, i.e., $\la^\prime_i=|\{j: \la_j \geq i\}|$. The \emph{Durfee square} of a partition $\la$ is the largest square which fits into the Young diagram. The \emph{Frobenius notation} of a partition $\la$ is $(\la_1-1,\ldots,\la_l-l|\la_1^\prime-1,\ldots,\la_l^\prime-l)$, where $l= \max_i\{ \la_i \geq i \}$ is the length of the Durfee square of $\la$.
\bigskip

Let $k$ be a non-negative integer. A \emph{$k$-tall partition}\footnote{For $k=0$, these objects were defined in \cite[Ex 6.16(bb), p.223]{Stanley99} without a name, and for $k=1$ in \cite{AignerFischerKonvalinkaNadeauTewari20} as modified balanced partitions.} $\la$ of size $n$ is a partition $\la=(\la_1,\ldots,\la_{n+k-1})$ with $\la_1\leq n-1$ that satisfies $\la_i +k \leq \la_i^\prime$ whenever $\la_i \geq i$. See Figure \ref{fig: mod bal part n=3} for an example.
 If $\la$ has Frobenius notation $(a_1,\ldots,a_l|b_1+k,\ldots,b_l+k)$, then $\la$ is a $k$-tall partition iff $a_i \leq b_i$ for all $1 \leq i \leq l$. Let $N$ denote a unit north-step and $E$ a unit east-step. The map
\[
(a_1,\ldots,a_l|b_1+k,\ldots,b_l+k) \mapsto
N^{b_l+1}E^{a_l+1}N^{b_{l-1}-b_l}E^{a_{l-1}-a_l} \cdots N^{b_1-b_2}E^{a_1-a_2}N^{n-b_1-1}E^{n-a_1-1}
\]
and $(|) \mapsto N^nE^n$ is a bijection from $k$-tall partitions of size $n$ to Dyck paths of length $2n$.
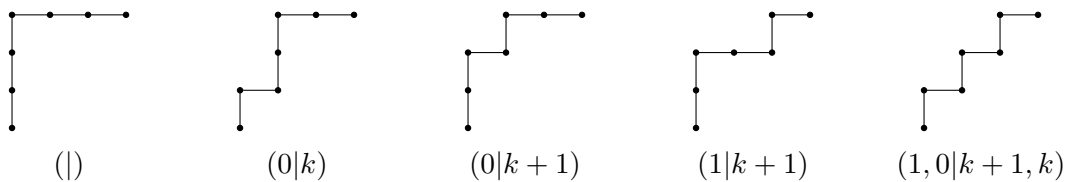
\begin{figure}[h]
\begin{center}
\begin{tikzpicture}
	\draw (0,0)--(0,1.5) -- (1.5,1.5);
	\draw[fill] (0,0) circle (1pt);
	\draw[fill] (0,0.5) circle (1pt);
	\draw[fill] (0,1) circle (1pt);
	\draw[fill] (0,1.5) circle (1pt);
	\draw[fill] (0.5,1.5) circle (1pt);
	\draw[fill] (1,1.5) circle (1pt);
	\draw[fill] (1.5,1.5) circle (1pt);
	\node at (.75,-.5) {$(|)$};
\begin{scope}[xshift=3cm]
	\draw (0,0)--(0,.5) -- (.5,.5) -- (0.5,1.5) -- (1.5,1.5);
	\draw[fill] (0,0) circle (1pt);
	\draw[fill] (0,0.5) circle (1pt);
	\draw[fill] (0.5,.5) circle (1pt);
	\draw[fill] (0.5,1) circle (1pt);
	\draw[fill] (0.5,1.5) circle (1pt);
	\draw[fill] (1,1.5) circle (1pt);
	\draw[fill] (1.5,1.5) circle (1pt);
	\node at (.75,-.5) {$(0|k)$};	
\end{scope}
\begin{scope}[xshift=6cm]
	\draw (0,0)--(0,1) -- (0.5,1) -- (0.5,1.5) -- (1.5,1.5);
	\draw[fill] (0,0) circle (1pt);
	\draw[fill] (0,0.5) circle (1pt);
	\draw[fill] (0,1) circle (1pt);
	\draw[fill] (0.5,1) circle (1pt);
	\draw[fill] (0.5,1.5) circle (1pt);
	\draw[fill] (1,1.5) circle (1pt);
	\draw[fill] (1.5,1.5) circle (1pt);
	\node at (.75,-.5) {$(0|k+1)$};
\end{scope}
\begin{scope}[xshift=9cm]
	\draw (0,0)--(0,1) -- (1,1) -- (1,1.5) -- (1.5,1.5);
	\draw[fill] (0,0) circle (1pt);
	\draw[fill] (0,0.5) circle (1pt);
	\draw[fill] (0,1) circle (1pt);
	\draw[fill] (0.5,1) circle (1pt);
	\draw[fill] (1,1) circle (1pt);
	\draw[fill] (1,1.5) circle (1pt);
	\draw[fill] (1.5,1.5) circle (1pt);
	\node at (.75,-.5) {$(1|k+1)$};
\end{scope}
\begin{scope}[xshift=12cm]
	\draw (0,0)--(0,.5) -- (.5,.5) -- (.5,1) -- (1,1) -- (1,1.5) -- (1.5,1.5);
	\draw[fill] (0,0) circle (1pt);
	\draw[fill] (0,0.5) circle (1pt);
	\draw[fill] (.5,.5) circle (1pt);
	\draw[fill] (0.5,1) circle (1pt);
	\draw[fill] (1,1) circle (1pt);
	\draw[fill] (1,1.5) circle (1pt);
	\draw[fill] (1.5,1.5) circle (1pt);
	\node at (.75,-.5) {$(1,0|k+1,k)$};
\end{scope}
\end{tikzpicture}
\caption{\label{fig: mod bal part n=3} All $k$-tall partitions of size $3$ in Frobenius notation together with their associated Dyck paths.}
\end{center}
\end{figure}
\bigskip

A \emph{plane partition} $\pi$ inside an $(a,b,c)$-box is an array $(\pi_{i,j})_{1 \leq i \leq a, 1 \leq j \leq b}$ of non-negative integers less than or equal to $c$, with weakly decreasing rows and columns, i.e., $\pi_{i,j} \geq \pi_{i,j+1}$ and  $\pi_{i,j} \geq \pi_{i+1,j}$. We can visualise a plane partition $\pi$ as stacks of unit cubes by putting $\pi_{i,j}$ cubes at position $(i,j)$, see Figure \ref{fig: ex PP}. The visualisation allows an equivalent definition of plane partitions: A plane partition $\pi$ inside an $(a,b,c)$-box is a subset of $[a]\times[b] \times[c]$, where $[n]=\{1,\ldots,n\}$, such that $(i,j,k) \in \pi$ implies $(i^\prime,j^\prime,k^\prime) \in \pi$ for all $i^\prime \leq i, j^\prime \leq j, k^\prime \leq k$.
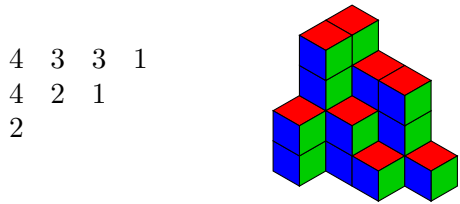
\begin{figure}
\begin{center}
\begin{tikzpicture}
\begin{scope}
	\node at (0,0) {$\begin{array}{cccc}
	4 & 3 & 3 & 1\\
	4 & 2 & 1 \\
	2
	\end{array}$};
	\end{scope}
	\begin{scope}[scale=0.4, xshift=9cm, yshift=-1cm]
		\PlanePartition{{4,3,3,1},{4,2,1,0},{2,0,0,0}}
	\end{scope}
\end{tikzpicture}
\caption{\label{fig: ex PP} A plane partition inside a $(3,4,4)$-box, with $0$ entries omitted, and its graphical representation as stacks of cubes.}
\end{center}
\end{figure}
A plane partition is called \emph{totally symmetric} if for every $(i,j,k) \in \pi$, all permutations of $(i,j,k)$ are also elements of $\pi$. We denote by $\TSPP_n$ the set of totally symmetric plane partitions (TSPPs) inside an $(n,n,n)$-box. Let $T=(T_{i,j})_{1\leq i,j \leq n}$ be a totally symmetric plane partition, $\diag(T)=(T_{i,i})_{1 \leq i \leq n}^\prime$ its diagonal (note that we conjugate) and $(a_1,\ldots,a_l|b_1,\ldots,b_l)$ the Frobenius notation of $\diag(T)$. The partition $\diag(T)$ describes the shape which is obtained by intersecting the visualisation of $T$ as stacks of cubes with the $x=y$ plane. See Figure \ref{fig:LGV} right for an example. We associate with $T$ the partition $\pi_k(T)=(a_1,\ldots,a_l|b_1+k,\ldots,b_l+k)$ of size $n+1$. As a consequence of the next proposition, we obtain that $\pi_k(T)$ is a $k$-tall partition of size $n+1$.

\begin{prop}
\label{prop: TSSPP refinement}
Let $\lambda=(a_1,\ldots,a_l|b_1+k,\ldots,b_l+k)$ be a $k$-tall partition. The number of totally symmetric plane partitions $T$ with $\pi_k(T)=\lambda$ is given by
\[
\det_{1 \leq i,j \leq l}\left( \binom{b_i}{a_j}\right).
\]
\end{prop}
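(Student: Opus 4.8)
The plan is to first remove the parameter $k$, reducing the statement to a count over the diagonal of $T$, and then to evaluate that count by a bijection to nonintersecting lattice paths followed by the Lindström--Gessel--Viennot (LGV) lemma. The reduction is where $k$ must be handled carefully. By its very definition, $\pi_k$ acts on the Frobenius notation of $\diag(T)$ by adding $k$ to each leg part and leaving the arm parts unchanged, so for fixed $k$ it is invertible. Hence a TSPP $T$ satisfies $\pi_k(T)=\lambda=(a_1,\dots,a_l\mid b_1+k,\dots,b_l+k)$ \emph{if and only if} $\diag(T)$ has Frobenius notation $(a_1,\dots,a_l\mid b_1,\dots,b_l)$. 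Consequently the quantity to be computed is independent of $k$ (as it must be, since the asserted determinant contains no $k$), and it equals the number of $T\in\TSPP_{n-1}$ (inside the ambient box) whose diagonal has this prescribed Frobenius notation. At this point I would also record, via the characterisation of $k$-tall partitions given in the text, that the hypothesis on $\lambda$ is equivalent to $a_i\le b_i$ for all $i$; this is exactly the condition making the diagonal binomials $\binom{b_i}{a_i}$ nonzero and, later, making the path endpoints compatibly ordered.

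Next I would encode TSPPs combinatorially. Since total symmetry is invariance under the full group $S_3$ acting on coordinates, and $S_3$ is generated by the transpositions $(1\,2)$ and $(2\,3)$, a TSPP is the same datum as an array $(T_{i,j})$ that is weakly decreasing along rows and columns (the plane-partition condition), symmetric $T_{i,j}=T_{j,i}$ (from $(1\,2)$), and whose every row $(T_{i,1},\dots,T_{i,n})$ is a self-conjugate partition (from $(2\,3)$). Writing $d=(T_{1,1},\dots,T_{n,n})$ for the diagonal partition, one has $\diag(T)=d'$, so fixing $\diag(T)$ pins down the diagonal entries, namely $T_{i,i}=b_i+i$ on the Durfee square $1\le i\le l$; the remaining task is to count the admissible fillings of the off-diagonal entries subject to symmetry and self-conjugacy of rows.

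The core is a bijection between these fillings and families of $l$ nonintersecting lattice paths, one per diagonal hook indexed by the Durfee square. The $i$-th path records the profile of the $i$-th principal diagonal hook of $T$ as it is extended consistently into the box: self-conjugacy of the rows forces each path to be monotone, while total symmetry forces the family to be mutually nonintersecting. I would place the sources and sinks so that the number of single monotone paths from source $i$ to sink $j$ equals $\binom{b_i}{a_j}$ — for instance sources $(0,-b_i)$ and sinks $(a_j,-a_j)$ with right/up steps, where a path uses $a_j$ right steps among $b_i$ steps in total. Because the Frobenius arms $a_1>\dots>a_l$ and legs $b_1>\dots>b_l$ are both strictly decreasing, this is a compatibly ordered configuration whose only non-crossing source-to-sink matching is the identity; the LGV lemma then gives the count as $\det_{1\le i,j\le l}\binom{b_i}{a_j}$. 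This realises the picture of Figure~\ref{fig:LGV}.

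The main obstacle is the bijection of the preceding paragraph: defining precisely, from a TSPP with the prescribed diagonal, the associated family of nonintersecting paths, and verifying that this is a bijection onto \emph{all} such families with the stated endpoints — equivalently, that monotonicity and the non-crossing condition are exactly the images of the plane-partition and total-symmetry constraints. Once this correspondence is established, the remaining ingredients — the reduction modulo $k$, the identification of the single-path counts with $\binom{b_i}{a_j}$, and the LGV evaluation — are routine.
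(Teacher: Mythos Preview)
Your proposal is correct and takes the same route as the paper: strip off $k$ (which the paper leaves implicit) and apply Lindstr\"om--Gessel--Viennot to count TSPPs with a prescribed diagonal. The paper resolves what you flag as the ``main obstacle'' by passing to the lozenge-tiling model and restricting to a fundamental sixth of the hexagon under the $S_3$-action, where the $a_i$'s and $b_i$'s appear as forced lozenges on the two bounding rays and the remaining freedom is manifestly a family of nonintersecting paths with the required endpoint counts.
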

\begin{proof}
This is a classical application of the Lindstr\"om-Gessel-Viennot theorem~\cite{GesselViennot85,Lindstroem73}, see also \cite{Stembridge95}. 
We sketch the proof on the example in Figure~\ref{fig:LGV}.  

TSPPs of order $n$ clearly correspond to lozenge tilings of a regular hexagon with side lengths $n$ that are symmetric with respect to the vertical symmetry axis as well as rotation of $120^\circ$. By this symmetry, it suffices to know a sixth of the lozenge tiling. In our example, we choose the sixth that is in the wedge of the red dotted rays.
\begin{figure}[h]
\centering
\begin{tikzpicture}
\begin{scope}[scale=0.5]
\PlanePartitionWhite{{4,4,4,3},{4,3,2,1},{4,2,1,1},{3,1,1,0}}
\RightBoundary{0}{-4}{4}{3}
\LeftBoundary{4}{0}{4}{3}
\TopBoundary{4}{-4}{0}{3}
\draw [color=green, line width=1.5pt] (0,1) -- ({1*cos(30)},{1+1*sin(30)}) -- (0,2) -- ({-1*cos(30)},{1+1*sin(30)}) -- (0,1);
\draw [color=green, line width=1.5pt] (0,3) -- ({1*cos(30)},{3+1*sin(30)}) -- (0,4) -- ({-1*cos(30)},{3+1*sin(30)}) -- (0,3);
\draw [color=green, line width=1.5pt] (0,-1) -- (0,0);
\draw [color=green, line width=1.5pt] (0,-3) -- (0,-2);
\draw [color=green, line width=1.5pt] (0,0) -- ({-1*cos(30)},{1*sin(30)});
\draw [color=green, line width=1.5pt] ({-3*cos(30)},{3*sin(30)}) -- ({-2*cos(30)},{2*sin(30)});
\draw [dashed, red] (0,0) -- (0,5);
\draw [dashed, red] (0,0) -- ({-5.25*cos(30)},{5.25*sin(30)});
\draw [dashed, blue, thick] ({1/2*cos(30)},{1+3/2*sin(30)}) -- ({-1/2*cos(30)},{1+1/2*sin(30)})  -- ({-1/2*cos(30)},{1/2*sin(30)});
\draw [dashed, blue, thick] ({1/2*cos(30)},{1+11/2*sin(30)}) -- ({-5/2*cos(30)},{1+5/2*sin(30)}) -- ({-5/2*cos(30)},{5/2*sin(30)});

\begin{scope}[xshift=8cm, yshift=-3.5cm, scale=1.1]
\Square{1}{4}
\Square{1}{3}
\Square{1}{2}
\Square{1}{1}
\Square{2}{3}
\Square{2}{2}
\Square{2}{1}
\Square{3}{1}
\draw [color=green, line width=1.5pt] (1,5) -- (2,5);
\draw [color=green, line width=1.5pt] (2,4) -- (3,4);
\draw [color=green, line width=1.5pt] (3,3) -- (3,2);
\draw [color=green, line width=1.5pt] (4,2) -- (4,1);
\end{scope}
\end{scope}
\end{tikzpicture}
\caption{\label{fig:LGV} Running example in the proof of Proposition~\ref{prop: TSSPP refinement} (left) and its diagonal $\diag(T)$ (right), with the corresponding edges also coloured in green.}
\end{figure}
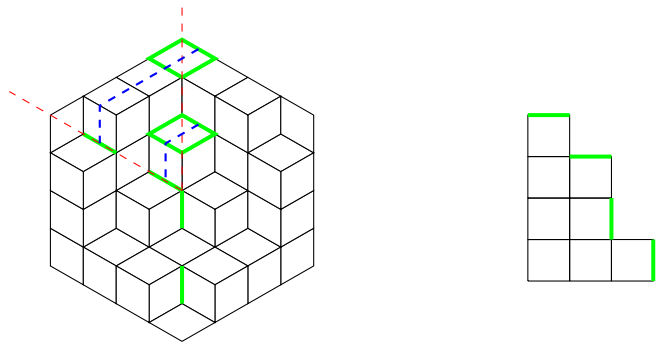 

\noindent
Now observe that the positions of the horizontal lozenges in the upper half of the vertical symmetry axis are prescribed by the $b_i$'s, while the positions of the vertical segments in the lower part of the vertical symmetry axis are prescribed by the $a_i$'s. Both are indicated in green in Figure~\ref{fig:LGV}. By the cyclic symmetry, these green segments have corresponding segments on the red dotted ray that is not contained on the vertical symmetry axis, again indicated in green in the figure. Now the lozenge tiling is determined by the family of non-intersecting lattice paths that connect these segments with the horizontal lozenges in the upper half of the vertical symmetry axis, indicated in blue in the figure.
\end{proof}

Let $\lambda=(\lambda_1,\ldots,\lambda_n)  \subseteq (n^n)$ be a partition with Frobenius notation $(a_1,\ldots,a_l|b_1,\ldots,b_l)$ and define $\lambda^c=(\lambda_i^c)_{1 \leq i \leq n}$ by $\lambda_{n+1-i}^c=n-\lambda_i$. Then $\lambda^c$ is the complement of $\lambda$ inside the partition $(n^n)$ in the sense that we can fill a square of side length $n$ by the Young diagrams of $\la$ and $\lambda^c$ without overlap, see Figure \ref{fig: ex complement of partition} for an example.
\begin{figure}[h]
\begin{center}
\begin{tikzpicture}
\begin{scope}[scale=.65]
\draw[fill = blue!30!white] (0,0) rectangle (4,4);
\draw[fill = red!40!white] (4,4) rectangle (6,6);
\foreach \x in {0,1,...,6}{
	\draw[gray, line width = 0.5pt] (0,\x) -- (6,\x);
	\draw[gray, line width = 0.5pt] (\x,0) -- (\x,6);
}
\draw[line width = 1.5pt] (0,0) -- (6,0) -- (6,2) -- (5,2) -- (5,4) -- (3,4) -- (3,5) -- (1,5) -- (1,6) -- (0,6)  -- (0,0);
\draw[line width = 1.5pt] (1,6) -- (6,6) -- (6,2);
\end{scope}
\end{tikzpicture}
\caption{\label{fig: ex complement of partition} The partition $\lambda=(6,6,5,5,3,1)$ in the bottom left and its complement $\lambda^c = (5,3,1,1)$ in the top right. Their corresponding Durfee squares are coloured in blue or red respectively.}
\end{center}
\end{figure}
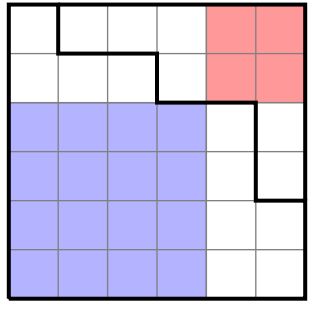
  Let $( a_1^c,\ldots,a_L^c|b_1^c,\ldots,b_L^c)$ be the Frobenius notation of $\lambda^c$. Every box of the $x=y$ diagonal of the $n \times n$ square is either in the Durfee square of $\lambda$ or of $ \lambda^c$. Hence we have $l+L=n$. Using induction on $n$, one can show %
\begin{equation}
\label{eq: Frobenius coef of complement}
\{a_1,\ldots,a_l, b_1^c,\ldots, b_L^c \}=\{ a_1^c,\ldots,a_L^c,b_1,\ldots,b_l\}=\{0,\ldots,n-1\}.
\end{equation}
For a totally symmetric plane partition $T=(T_{i,j})_{1 \leq i,j \leq n}$ inside an $(n,n,n)$-box denote by $T^c$ the complement of $T$, defined by
\[
T^c = (n-T_{n+1-i,n+1-j})_{1 \leq i,j \leq n}.
\]
The map $T \mapsto T^c$ is an involution on  totally symmetric plane partitions inside an $(n,n,n)$-box which satisfies $\pi_0(T^c) = \pi_0(T)^c$. Together with Proposition \ref{prop: TSSPP refinement}, this implies
\begin{equation}
\label{eq: det of complement}
\det_{1 \leq i,j \leq l}\left( \binom{b_i}{a_j}\right) =
\det_{1 \leq i,j \leq n-l}\left( \binom{b_i^c}{a_j^c}\right).
\end{equation}

\bigskip

Denote by $\A_{n,k}(r, u,v,w;\x)$ the symmetric polynomials in $\x=(x_1,\ldots,x_{n+k-1})$ defined by
\begin{equation}
\label{eq: def of A}
\A_{n,k}(r, u,v,w;\x)=\sum_{T \in \TSPP_{n-1}}\omega(T) s_{\pi_k(T)}(\x),
\end{equation}
where 
\[
\omega(T)=r^l u^{\sum\limits_{i=1}^l (a_i+1)}  v^{\binom{n}{2}-\sum\limits_{i=1}^l (b_i+1)} w^{\sum\limits_{i=1}^l (b_i-a_i)},
\]
if $\diag(T)$ has Frobenius notation $(a_1,\ldots,a_l|b_1,\ldots,b_l)$. We list this family of symmetric functions for $n\leq 4$.

\begin{align*}
\A_{1,k} (r, u,v,w,t;\x) &=1, \\
\A_{2,k} (r, u,v,w,t;\x) &= v + r u \,s_{(0|k)}(\x), \\
\A_{3,k} (r, u,v,w,t;\x) &=  v^3 + r  u v^2  \, s_{(0|k)}(\x) + 
	r  u v w  \,s_{(0|k+1)}(\x)+ r   u^2 v \,s_{(1|1+k)}(\x) \\
	& \quad +r^2 u^3 \,s_{(1,0|1+k,k)}(\x), \\ 
\A_{4,k} (r, u,v,w,t;\x) &=   v^6
	+r  u v^5  \,s_{(0|k)}(\x)
	+r  u v^4 w \,s_{(0|k+1)}(\x)
	+r  u^2 v^4 \,s_{(1|k+1)}(\x) \\ 
	& \quad +r  u v^3 w^2 \,s_{(0|k+2)}(\x) 
	+2r  u^2 v^3 w \,s_{(1|k+2)}(\x) 
	+r  u^3 v^3 \,s_{(2|k+2)}(\x)  \\
	& \quad +r^2  u^3 v^3 \,s_{(1,0|k+1,k)}(\x) 
	+2r^2  u^3 v^2 w  \,s_{(1,0|k+2,k)}(\x) 
	+r^2  u^4 v^2 \,s_{(2,0|k+2,k)}(\x)  \\ 
	& \quad +r^2  u^3 v w^2 \,s_{(1,0|k+2,k+1)}(\x)
	+r^2  u^4 v w \,s_{(2,0|k+2,k+1)}(\x) \\
	& \quad+r^2  u^5 v \,s_{(2,1|k+2,k+1)}(\x) 
	 +r^3 u^6 \,s_{(2,1,0|k+2,k+1,k)}(\x).
\end{align*}


\section{The symmetric generating function for ASMs}
\label{sec: ASM gen fct}

An \emph{alternating sign matrix}, or \emph{ASM} for short, of size $n$ is an $n \times n$ matrix with entries $-1,0,1$ such that all row- and column-sums are equal to $1$ and in all rows and columns the non-zero entries alternate. See Figure \ref{fig: ASM and MT} (left) for an example of an ASM of size $6$. We denote by $\ASM_n$ the set of ASMs of size $n$. 
Following the convention of \cite[Eq. 18]{RobbinsRumsey86} and \cite{Fischer18}, we define the \emph{inversion number} $\inv$ and the \emph{complementary inversion number} $\inv^\prime$ of an ASM $A=(a_{i,j})_{1 \leq i,j \leq n}$ of size $n$ as
$$
\inv(A):= \sum_{1 \leq i^\prime < i \leq n \atop 1 \leq j^\prime \leq j \leq n}a_{i^\prime, j}a_{i,j^\prime} \quad \text{and} \quad
\inv^\prime(A):= \sum_{1 \leq i^\prime < i \leq n \atop 1 \leq j \leq j^\prime \leq n}a_{i^\prime, j}a_{i,j^\prime},
$$
and denote by $\mathcal{N}(A)$ the number of $-1$'s of $A$.  The number of $-1$ entries, the inversion number and the complementary inversion number of an ASM $A$ of size $n$ are connected by
\[
\mathcal{N}(A)+\inv(A)+\inv^\prime(A)=\binom{n}{2},
\]
which follows immediately by relating these statistics with the corresponding statistics on monotone triangles; this is described after Theorem \ref{thm: sym gen fct via Operatoren}.
It is easy to see that there is a unique $1$ entry in the top (resp. bottom) row of $A$. We denote by $\rho_T(A)$ the number of $0$ entries left of the unique $1$ in the top row, and by $\rho_B(A)$ the number of $0$ entries right of the unique $1$ in the bottom row.
For the example given in Figure \ref{fig: ASM and MT}, the five statistics are $(\mathcal{N}(A),\inv(A),\inv^\prime(A),\rho_T(A),\rho_B(A))=(2,6,7,3,2)$.
\begin{figure}[h]
\begin{center}
\[
\begin{pmatrix}
0 & 0 & 0 & 1 & 0 & 0 \\
0 & 1 & 0 & 0 & 0 & 0 \\
0 & 0 & 1 & -1 & 1 & 0 \\
1 & -1 & 0 & 0 & 0 & 1 \\
0 & 1 & 0 & 0 & 0 & 0 \\
0 & 0 & 0 & 1 & 0 & 0
\end{pmatrix}
\qquad \qquad
\begin{array}{c cc cc cc cc cc}
&&&&& \it{\cyan{4}} \\
&&&& 2 && \mathbf{\red{4}}\\
&&&\mathbf{\red{2}} && 3 && 5 \\
&& 1 && \it{\cyan{3}} && \it{\cyan{5}} && \it{\cyan{6}}\\
& 1 && 2 && 3 && \it{\cyan{5}} && \it{\cyan{6}}\\
1 && 2 && 3 && 4 && 5 && 6
\end{array}
\]
\end{center}
\caption{\label{fig: ASM and MT} An ASM of size $6$ and its corresponding monotone triangle, where the special entries are written in bold and red, and the right-leaning entries are in italic and blue.}
\end{figure}
\bigskip

A \emph{monotone triangle} with $n$ rows is a triangular array $(m_{i,j})_{1\leq j \leq i \leq n}$ of integers of the following form,
\[
\begin{array}{c c c c c c c c c c c}
&&&&& m_{1,1}\\
&&&& m_{2,1} && a_{2,2}\\
&&& \iddots && \cdots &&\ddots\\
&& \iddots && \iddots && \ddots &&\ddots\\
& m_{n-1,1} && m_{n-1,2} && \cdots  && \cdots  && m_{n-1,n-1}\\
m_{n,1} && m_{n,2} && m_{n,3} && \cdots  && \cdots && m_{n,n}
\end{array}
\]
such that the entries are weakly increasing along northeast and southeast diagonals, i.e., $m_{i+1,j}  \leq m_{i,j} \leq m_{i+1,j+1}$, and strictly increasing along rows. Given an ASM $A$ of size $n$, we obtain a monotone triangle by recording in the $i$-th row from top the indices of the columns with a positive partial column sum of the top $i$ rows of $A$. For an example see Figure \ref{fig: ASM and MT}. It is well-known that this map is a bijection between ASMs of size $n$ and monotone triangles with bottom row $1,2,\ldots,n$.
Each entry of a monotone triangle $M=(m_{i,j})_{1 \leq j \leq i \leq n}$ not in the bottom row is exactly of one of the following three types.
\begin{itemize}
\item An entry $m_{i,j}$ is called \emph{special} iff $m_{i+1,j} < m_{i,j} < m_{i+1,j+1}$.
\item An entry $m_{i,j}$ is called \emph{left-leaning} iff $m_{i,j}= m_{i+1,j}$.
\item An entry $m_{i,j}$ is called \emph{right-leaning} iff $m_{i,j}= m_{i+1,j+1}$.
\end{itemize}
For $1 \leq i \leq n-1$, we define the following statistics,
\begin{align*}
\begin{array}{lccl}
s_i(M) = \# \text{ of special entries in row }i,   &&&s(M) = \# \text{ of all special entries},\\
\,l_i(M) = \# \text{ of left-leaning entries in row }i,  &&&\,l(M) = \# \text{ of all left-leaning entries},\\
r_i(M) = \# \text{ of right-leaning entries in row }i,   &&&r(M) = \# \text{ of all right-leaning entries},
\end{array}
\end{align*} 
and set $s_0(M)=l_0(M)=r_0(M)=0$. In our running example in Figure \ref{fig: ASM and MT}, these statistics are
\begin{align*}
(s_i(M))_{1 \leq i \leq 5} &= (0,1,1,0,0), & s(M) = 2,\\
(l_i(M))_{1 \leq i \leq 5} &= (0,1,2,1,3), & l(M) = 7,\\
(r_i(M))_{1 \leq i \leq 5} &= (1,0,0,3,2), & r(M) = 6. 
\end{align*}
Finally, we set for $1 \leq i \leq n$
\begin{align*}
\widehat d_i(M) = \sum_{j=1}^i m_{i,j} - \sum_{j=1}^{i-1} m_{i-1,j} +r_{i-1}(M) -l_{i-1}(M)-1,
\end{align*}
and define the weight $\omega_M(u,v,w;\x)$ of a monotone triangle as
\[
\omega_M(u,v,w;\x) =  u^{r(M)} v^{l(M)}  \prod_i^{n} x_i^{\widehat d_i(M)} ( u x_i + w + v x_i^{-1} )^{s_{i-1}(M)} ,
\]
where $\x=(x_1,\ldots,x_n)$. In our running example in Figure \ref{fig: ASM and MT}, the weight $\omega_M(u,v,w;\x)$ is given by
\[
\omega_M(u,v,w;\x) =  u^6  v^7 x_1^3 x_2^2 x_3^2 x_4^2 x_5^3 x_6^2 ( u x_3 + w + v x_3^{-1})( u x_4 + w +v x_4^{-1}) 
\]
\begin{rem}
\label{rem: slightly different weight}
The weight $\omega_M(u,v,w;\x)$ is related to the weight $W_0(M)$, which is defined in \cite[p. 12]{AignerFischer2106.11568}, by the relation 
\[
\omega_M(u,v,w;\x) =W_0(M^\prime),
\]
where $M^\prime$ is the monotone triangle obtained by subtracting $1$ from all entries in $M$.
\end{rem}

For an ASM $A$, we set $\omega_A(u,v,w;\x)=\omega_M(u,v,w;\x)$, where $M$ is the corresponding monotone triangle. We call the generating function of ASMs with respect to the weight $\omega_A(u,v,w;\x)$ the \emph{symmetric generating function} for ASMs since it turns out to be a symmetric polynomial in $\x$.  As a special case of \cite[Theorem 3.1]{AignerFischer2106.11568}, we have the following theorem.

\begin{thm}
\label{thm: sym gen fct via Operatoren}
Let $E_x$ denote the \emph{shift operator} which is defined as $E_x f(x)=f(x+1)$.
The \emph{symmetric generating function} for ASMs of size $n$ is
\begin{equation}
\label{eq: sym gen fct via Operatoren}
\sum_{A \in \ASM_n} \omega_A(u,v,w;\x) = \left.\prod_{1 \leq i < j \leq n} \left(u E_{\lambda_i}  + w E_{\lambda_i}E_{\lambda_j}^{-1} +  v E_{\lambda_j}^{-1}  \right) s_{(\lambda_n,\ldots,\lambda_1)}(\x)\right|_{\lambda_i=i-1}.
\end{equation}
\end{thm}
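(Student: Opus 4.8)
The plan is to obtain \eqref{eq: sym gen fct via Operatoren} as the evaluation at bottom row $(0,1,\dots,n-1)$ of the general operator formula for the $W_0$-weighted enumeration of monotone triangles with a prescribed bottom row, established in \cite[Theorem 3.1]{AignerFischer2106.11568}, with the passage between the two weights supplied by Remark~\ref{rem: slightly different weight}. So this is not really a from-scratch argument but a specialisation, and the work lies in matching conventions.

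First I would recall the companion result in the form relevant here. For a strictly increasing integer bottom row $(k_1,\dots,k_n)$, it expresses the generating function $\sum_M W_0(M)$ over all monotone triangles with that bottom row as
\[
\left.\prod_{1 \leq i < j \leq n}\left(u E_{\lambda_i}+w E_{\lambda_i}E_{\lambda_j}^{-1}+v E_{\lambda_j}^{-1}\right)s_{(\lambda_n,\dots,\lambda_1)}(\x)\right|_{\lambda_i=k_i},
\]
where the shift operators act on $s_{(\lambda_n,\dots,\lambda_1)}(\x)$, regarded via the bialternant identity
\[
s_{(\lambda_n,\dots,\lambda_1)}(\x)=\frac{\det_{1\leq i,p\leq n}\bigl(x_i^{\lambda_p+p-1}\bigr)}{\prod_{1\leq i<j\leq n}(x_i-x_j)}
\]
as a function of the formal variables $\lambda_1,\dots,\lambda_n$, the substitution $\lambda_i=k_i$ being carried out only afterwards. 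I would record that setting $u=v=1$, $w=-1$ recovers Fischer's classical operator formula for the number $\alpha(n;k_1,\dots,k_n)$ of monotone triangles, with the Weyl dimension polynomial $\prod_{i<j}(k_j-k_i)/(j-i)$ replaced by the full Schur character $s_{(\lambda_n,\dots,\lambda_1)}(\x)$.

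Next I would bridge the two weights and specialise. By Remark~\ref{rem: slightly different weight}, $\omega_M(u,v,w;\x)=W_0(M^\prime)$, where $M^\prime$ is obtained from $M$ by subtracting $1$ from every entry. Since ASMs of size $n$ are in bijection with monotone triangles with bottom row $1,2,\dots,n$, the map $M\mapsto M^\prime$ is a bijection onto monotone triangles with bottom row $0,1,\dots,n-1$ that transports the weights, so that
\[
\sum_{A\in\ASM_n}\omega_A(u,v,w;\x)=\sum_{M^\prime}W_0(M^\prime),
\]
the right-hand sum ranging over monotone triangles with bottom row $0,1,\dots,n-1$. Evaluating the displayed operator formula at $(k_1,\dots,k_n)=(0,1,\dots,n-1)$, i.e.\ at $\lambda_i=i-1$, then reproduces exactly the right-hand side of \eqref{eq: sym gen fct via Operatoren}.

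The substantive analytic content—proving the operator formula itself, typically by induction on the number of rows through a row-to-row summation operator—resides in \cite{AignerFischer2106.11568}, so the main obstacle here is purely bookkeeping. Concretely, I would need to confirm the index-reversal convention in $s_{(\lambda_n,\dots,\lambda_1)}$ (which reflects that the bottom row increases while a partition weakly decreases), check that the operators are applied \emph{before} the substitution $\lambda_i=i-1$ so that the $E_{\lambda_i}$ genuinely shift the exponents $\lambda_p+p-1$ in the bialternant, and verify that the statistics $r$, $l$, $s_{i-1}$ and $\widehat d_i$ entering $\omega_M$ correspond, under the global shift by $1$, to those defining $W_0$—which is precisely the content of Remark~\ref{rem: slightly different weight}.
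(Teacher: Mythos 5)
Your proposal matches the paper exactly: the theorem is stated there as a special case of \cite[Theorem 3.1]{AignerFischer2106.11568}, with Remark~\ref{rem: slightly different weight} supplying the weight translation $\omega_M(u,v,w;\x)=W_0(M^\prime)$ and the shift of the bottom row from $1,\dots,n$ to $0,\dots,n-1$ accounting for the substitution $\lambda_i=i-1$. The bookkeeping you outline is precisely what the paper relies on.
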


Let $A$ be an ASM of size $n$ and $M$ its corresponding monotone triangle. By comparing the statistics for ASMs and monotone triangles, we have
\begin{align*}
\inv(A) &= r(M), \\
\inv^\prime(A) &= l(M),\\
\mathcal{N}(A) &= s(M),\\
\rho_T(A) &= \widehat{d}_1(M),\\
\rho_B(A) &= \widehat{d}_n(M),
\end{align*}
where the last two identities follow directly from the definitions and for the first three compare to \cite{AignerFischer2106.11568}.
Since the bottom row of $M$ is $1,2,\ldots,n$, there are no special entries in row $n-1$. Further there are no special entries in row $0$, since this row has no entries. Therefore, the symmetric generating function specialises to
\begin{equation}
\label{eq: specialisation of sym gen fct}
\left. \sum_{A \in \ASM_n} \omega_A(u,v,w;\x) \right|_{x_2 = \ldots =x_{n-1}=1} =
\sum_{A \in \ASM_n}  (u+ v+w)^{\mathcal{N}(A)}  u^{\inv(A)}  v^{\inv^\prime(A)} x_1^{\rho_T(A)} x_n^{\rho_B(A)}.
\end{equation}
\bigskip

Theorem~\ref{eq: sym gen fct via Operatoren} arose naturally from a constant term formulation of the operator formula in \cite{Fischer06} for monotone triangles with bottom row $1,2,\ldots,n$ (it is generalised to arbitrary bottom rows in \cite[Theorem 3.1]{AignerFischer2106.11568}). The purpose of the following digression is to relate it to a function that appeared in connection with the six-vertex model. This interesting relation was brought to our attention by a referee of the FPSAC submission, and we wish to thank her/him for sharing this insight.

A \emph{configuration of the six-vertex model} of size $n$ is an orientation of the $n \times n$ grid with $n$ external edges\footnote{An external edge is an edge with only one incident vertex.} on each side such that for each vertex the indegree equals the outdegree. We restrict ourselves to configurations, where the external edges on the top and bottom are oriented outwards, and on the left and right are oriented inwards; this is called the \emph{domain wall boundary condition} (DWBC). 
\begin{figure}[h]
\begin{center}
\begin{tikzpicture}
\begin{scope}[scale=0.85]
\VertexRU{0}{0}
\VertexLD{2}{0}
\VertexLU{4.5}{0}
\VertexRD{6.5}{0}
\VertexOne{9}{0}
\VertexMinus{11}{0}
\node at (0,-1.5) {$(1)$};
\node at (2,-1.5) {$(2)$};
\node at (4.5,-1.5) {$(3)$};
\node at (6.5,-1.5) {$(4)$};
\node at (9,-1.5) {$(5)$};
\node at (11,-1.5) {$(6)$};
\end{scope}
\end{tikzpicture}
\caption{\label{fig: 6-vert} The six possible configurations at a vertex.} 
\end{center}
\end{figure}
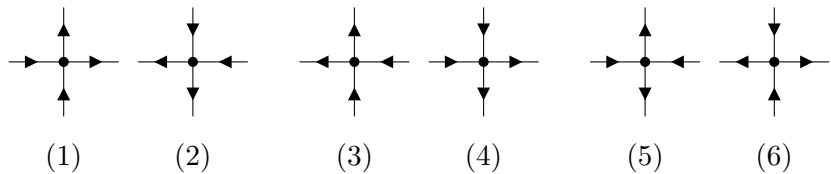
It is well known that configurations of the six-vertex model with DWBC are mapped bijectively to ASMs by replacing the fifth vertex configurations in Figure \ref{fig: 6-vert} by a $1$ entry, the sixth configuration by a $-1$ entry and the other configurations by $0$ entries. For an example see Figure \ref{fig: asm to 6vert}.
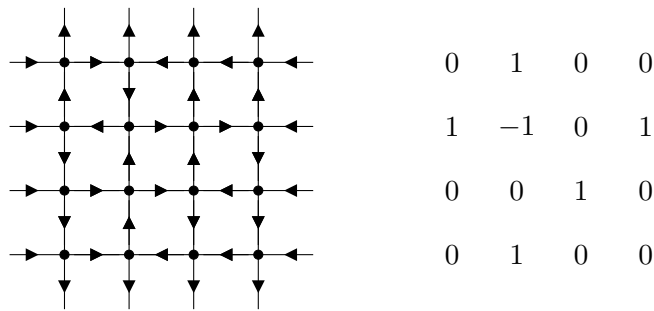
\begin{figure}
\begin{center}
\begin{tikzpicture}
\begin{scope}[scale=0.85]
\VertexRU{0}{0}
\VertexRU{2}{-1}
\VertexRU{1}{-2}
\VertexLU{2}{0}
\VertexLU{3}{0}
\VertexRD{0}{-2}
\VertexRD{0}{-3}
\VertexLD{2}{-3}
\VertexLD{3}{-3}
\VertexLD{3}{-2}
\VertexOne{1}{0}
\VertexOne{0}{-1}
\VertexOne{3}{-1}
\VertexOne{2}{-2}
\VertexOne{1}{-3}
\VertexMinus{1}{-1}
\begin{scope}[xshift=6cm]
\node at (0,0) {$0$};
\node at (1,0) {$1$};
\node at (2,0) {$0$};
\node at (3,0) {$0$};
\node at (0,-1) {$1$};
\node at (1,-1) {$-1$};
\node at (2,-1) {$0$};
\node at (3,-1) {$1$};
\node at (0,-2) {$0$};
\node at (1,-2) {$0$};
\node at (2,-2) {$1$};
\node at (3,-2) {$0$};
\node at (0,-3) {$0$};
\node at (1,-3) {$1$};
\node at (2,-3) {$0$};
\node at (3,-3) {$0$};
\end{scope}
\end{scope}
\end{tikzpicture}
\end{center}
\caption{\label{fig: asm to 6vert} A configuration of the six-vertex model with DWBC and its corresponding ASM.}
\end{figure}
For an ASM $A$, we denote by $\nu_i(A)$ the number of configurations of type $(1)$ and $(2)$ in the $i$-th row of the corresponding six-vertex configuration and by $\mu_i(A)$ the number of configurations of type $(6)$ in row $i$. 
In \cite{Behrend13}, Behrend considered the following generating function of ASMs
\begin{multline*}
X_n(u,w;x_1,\ldots,x_n) \\= Z_n^{1,2,\ldots,n}(u,w;x_1,\ldots,x_n;u x_1^2 + (w-u-1)x_1+1, \ldots, 	u x_n^2 + (w-u-1)x_n+1) \\
= \sum_{A \in \ASM_n} u^{\inv(A)} \prod_{i=1}^n x_i^{\nu_i(A)} (u x_i^2 + (w-u-1)x_i+1)^{\mu_i(A)}. 
\end{multline*}
For the definitions of $X_n$ and $Z_n$, see \cite[Eqs. 67, 70, 73]{Behrend13} and, for the statistics compare to \cite[Eq. 2, 113]{Behrend13}. In the following, we show that the function $X_n$ satisfies
\begin{equation}
\label{eq: connection to X}
\sum_{A \in \ASM_n}  \omega_A(u,1,w;\x)  = X_n(u,1+u+w;\x).
\end{equation}
For an ASM $A$, let $M=(m_{i,j})$ be the monotone triangle associated to $A$. The equation \eqref{eq: connection to X} is an easy consequence of the identities $s_{i-1}(M)=\mu_i(A)$ and $\nu_i(A) = \widehat{d}_i(M)-s_{i-1}(M)$. The first identity follows directly from the bijections between monotone triangles, ASMs and configurations of the six-vertex model. In the remainder of this section, we prove the second identity.

Let $a_0,\ldots,a_l$ (resp. $b_1,\ldots,b_l$) be the positions of the $1$ (resp. $-1$) entries in row $i$. By the definition of the bijection between monotone triangles and ASMs, we have
\begin{align*}
\{a_0,\ldots,a_l\} &= \{m_{i,1}, \ldots ,m_{i,i} \} \setminus \{ m_{i-1,1}, \ldots , m_{i-1,i-1}\},\\
\{b_1,\ldots,b_l\} &= \{ m_{i-1,1}, \ldots , m_{i-1,i-1}\} \setminus \{m_{i,1}, \ldots ,m_{i,i} \}.
\end{align*}
Note that the second equality implies $l=s_{i-1}(M)$.
In the corresponding six-vertex configuration, the vertex configurations of type $(1)$ correspond to $0$ entries in the ASM that satisfy the following two conditions: (a) they are left of the first $1$ or between a $-1$ and the following $1$, and (b) the entries in the same column and above the $0$ sum to $0$. There are $(a_0-1) + \sum_{j=1}^l (a_j-b_j-1)$ entries satisfying condition (a). On the other hand, it is not difficult to see that the $0$ entries which satisfy (a) but not (b) are exactly in the columns corresponding to a left-leaning entry of $M$ in row $i-1$, i.e., there are $l_{i-1}(M)$ such entries. Configurations of type $(2)$ correspond to $0$ entries between a $1$ and the following $-1$ entry with the property that the entries in the same column and above the $0$ sum to $1$. These positions correspond to the right-leaning entries in $M$ in row $i-1$, hence there are $r_{i-1}(M)$ such entries. Putting this all together, we have
\begin{multline*}
\nu_i(A) = (a_0-1) + \sum_{j=1}^l (a_j-b_j-1)  - l_{i-1}(M) + r_{i-1}(M)\\
= \sum_{j=1}^i m_{i,j} - \sum_{j=1}^{i-1} m_{i-1,j}  -s_{i-1}(M)-1- l_{i-1}(M) + r_{i-1}(M)
=\widehat{d}_i(M)  -s_{i-1}(M).
\end{multline*}


\section{An antisymmetriser to determinant formula}
\label{sec: antisym to det}
In this section we provide a fundamental tool for the proof of Theorem~\ref{thm: main thm 1}. We present both a non-combinatorial proof and a combinatorial proof for it. More applications of it appeared in \cite{AignerFischer2106.11568}.

\begin{lem}  
\label{lem: general}
Let $n \ge 1$, and $\mathbb{X}=(X_1,\ldots,X_n), \mathbb{Y}=(Y_1,\ldots,Y_n)$ be indeterminants.
 Then
$$
\widehat{\asym}  \left[  \prod_{1 \le i \le j \le n} (Y_j-X_i)     \right]
= \det_{1 \le i, j \le n} \left( Y_i^j - X_i^j \right),
$$ 
with 
$$
\widehat{\asym} \left[f(\mathbb{X};\mathbb{Y})\right] = \sum_{\sigma \in {\mathcal{S}_n}} \sgn \sigma f(X_{\sigma(1)},\ldots,X_{\sigma(n)};Y_{\sigma(1)},\ldots,Y_{\sigma(n)}).
$$ 
\end{lem}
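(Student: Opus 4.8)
The plan is to reduce both sides to a cleaner identity by extracting the ``diagonal'' factors, and then to prove the reduced identity by induction on $n$, with a parallel reading via non-intersecting lattice paths.

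First I would factor. In the determinant each entry of row $i$ is divisible by $Y_i-X_i$, since $Y_i^j-X_i^j=(Y_i-X_i)\,h_{j-1}(X_i,Y_i)$, where $h_m(X,Y)=\sum_{k=0}^m X^k Y^{m-k}$ is the complete homogeneous symmetric polynomial of degree $m$ in the two variables. Pulling $Y_i-X_i$ out of row $i$ gives
\[
\det_{1\le i,j\le n}\bigl(Y_i^j-X_i^j\bigr)=\prod_{i=1}^n (Y_i-X_i)\cdot\det_{1\le i,j\le n}\bigl(h_{j-1}(X_i,Y_i)\bigr).
\]
On the left-hand side I would split $\prod_{1\le i\le j\le n}(Y_j-X_i)=\prod_{i=1}^n(Y_i-X_i)\cdot\prod_{1\le i<j\le n}(Y_j-X_i)$. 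The diagonal factor $\prod_i(Y_i-X_i)$ is invariant under the simultaneous action of $\mathcal{S}_n$ on the pairs $(X_i,Y_i)$, hence it pulls out of the antisymmetriser. It therefore suffices to prove the reduced identity
\[
\widehat{\asym}\Bigl[\textstyle\prod_{1\le i<j\le n}(Y_j-X_i)\Bigr]=\det_{1\le i,j\le n}\bigl(h_{j-1}(X_i,Y_i)\bigr).
\]

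To prove the reduced identity I would induct on $n$. The key observation is that in $g_n:=\prod_{1\le i<j\le n}(Y_j-X_i)$ the variable $Y_1$ is absent and the index $1$ enters only through the factors $(Y_j-X_1)$, $2\le j\le n$, so that $g_n=\prod_{j=2}^n(Y_j-X_1)\cdot\prod_{2\le i<j\le n}(Y_j-X_i)$. Grouping the permutations in $\widehat{\asym}$ according to the value $p=\sigma(1)$ and using the standard cofactor sign $\sgn\sigma=(-1)^{p-1}\sgn\bar\sigma$ for the induced bijection $\bar\sigma\colon\{2,\dots,n\}\to[n]\setminus\{p\}$, the antisymmetriser obeys the Laplace-type recursion
\[
\widehat{\asym}\Bigl[\textstyle\prod_{1\le i<j\le n}(Y_j-X_i)\Bigr]
=\sum_{p=1}^n(-1)^{p-1}\Bigl(\textstyle\prod_{b\ne p}(Y_b-X_p)\Bigr)\,
\widehat{\asym}^{(\widehat p)}\Bigl[\textstyle\prod_{i<j}(Y_j-X_i)\Bigr],
\]
where the superscript $(\widehat p)$ denotes the antisymmetriser over the index set $[n]\setminus\{p\}$. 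I would then close the induction by verifying that $\det(h_{j-1}(X_i,Y_i))$ satisfies this same recursion, the base case $n=1$ reading $1=1$.

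The main obstacle is precisely this last verification: the coefficient $\prod_{b\ne p}(Y_b-X_p)$ is \emph{not} a single cofactor of the matrix, so establishing the recursion for $\det(h_{j-1}(X_i,Y_i))$ requires a genuine determinant identity rather than a plain row expansion. Here the combinatorial route clarifies matters. Interpreting $h_{j-1}(X_i,Y_i)$ as the weight generating function of paths from a source $i$ to a sink $j$ in a suitable directed graph, the determinant becomes a signed count of path families, the Lindström--Gessel--Viennot involution removes the intersecting ones, and the surviving non-intersecting families match the expansion of the antisymmetriser, with $\prod_{b\ne p}(Y_b-X_p)$ arising as the weight of the path emanating from the distinguished source. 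The delicate point, and the real crux of the combinatorial argument, is that the edge weights $X_i,Y_i$ depend on the source row $i$; the graph and the sign-reversing involution must therefore be arranged so that swapping path tails at an intersection preserves the total weight despite this dependence.
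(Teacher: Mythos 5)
Your reduction is sound: pulling the diagonal factor $\prod_{i=1}^n(Y_i-X_i)$ out of both sides, and the Laplace-type recursion for the antisymmetriser obtained by conditioning on $p=\sigma(1)$, are both correct, and up to the cosmetic removal of the diagonal this is exactly the skeleton of the paper's first (inductive) proof. But the step you yourself flag as the main obstacle --- that $\det_{1\le i,j\le n}\bigl(h_{j-1}(X_i,Y_i)\bigr)$ satisfies the same recursion, with coefficient $(-1)^{p-1}\prod_{b\ne p}(Y_b-X_p)$ in place of a cofactor --- is left unproved, and the Lindstr\"om--Gessel--Viennot sketch you offer to close it does not work as stated. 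The entry $h_{j-1}(X_i,Y_i)$ carries variables indexed by the row $i$, so any path model must place the weights $X_i,Y_i$ on edges reachable only from source $i$; swapping tails at an intersection then produces a family whose weight is no longer of the form $\prod_i X_i^{k_i}Y_i^{m_i}$, so the sign-reversing involution is not weight-preserving. Moreover the determinant is genuinely signed (for $n=2$ it equals $X_2+Y_2-X_1-Y_1$), so it cannot be a plain non-intersecting-path count. The paper's combinatorial proof is in fact a considerably more elaborate involution on orientations of a looped complete graph, not an LGV argument, which is a sign that no routine LGV set-up is available here.

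The gap is, however, fillable by pure algebra, and this is precisely what the paper's first proof does. One has
\begin{equation*}
\sum_{j=1}^{n} h_{j-1}(X_i,Y_i)\, e_{n-j}(-Y_1,\ldots,-Y_n)=(-1)^{n-1}\prod_{k\ne i}(Y_k-X_i),\qquad 1\le i\le n,
\end{equation*}
which is the paper's identity \eqref{fundamentalidentity} divided by $Y_i-X_i$. Since the coefficient of the $n$-th column is $e_0=1$, adding $e_{n-j}(-Y_1,\ldots,-Y_n)$ times column $j$ to column $n$ for $j<n$ replaces the last column of $\bigl(h_{j-1}(X_i,Y_i)\bigr)$ by the vector $\bigl((-1)^{n-1}\prod_{k\ne i}(Y_k-X_i)\bigr)_{i}$ without changing the determinant; expanding along this modified column yields exactly your recursion $\sum_{p=1}^n(-1)^{p-1}\prod_{b\ne p}(Y_b-X_p)\,M_p$, where $M_p$ is the minor obtained by deleting row $p$ and column $n$. (The paper packages the same computation as a linear system solved by Cramer's rule.) With this lemma inserted, your argument closes and coincides in substance with the paper's first proof; without it, the induction does not go through.
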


\begin{proof}[First proof]  
Since we aim that proving the equality of two polynomials in $X_1,\ldots,X_n,Y_1,\ldots,Y_n$, standard arguments imply that it suffices to consider the case when $X_1,\ldots,X_n,Y_1,\ldots,Y_n$ are algebraically independent. In particularly, we may assume $\det_{1 \le i, j \le n} \left( Y_i^j - X_i^j \right) \not=0$, which will be useful below.



The proof is by induction with respect to $n$. The result is obvious for $n=1$. 
Let $L_n(\mathbb{X};\mathbb{Y})$, $R_n(\mathbb{X};\mathbb{Y})$ denote the left- and right-hand side of the identity in the statement, respectively. By the induction hypothesis, we can assume 
$ L_{n-1}(Y_1,\ldots,Y_{n-1};X_1,\ldots,X_{n-1}) \allowbreak \break= R_{n-1}(Y_1,\ldots,Y_{n-1};X_1,\ldots,X_{n-1})$.
We show that both $L_n(\mathbb{X};\mathbb{Y})$ and $R_n(\mathbb{X};\mathbb{Y})$ can be computed recursively using 
$L_{n-1}(X_1,\ldots,X_{n-1};Y_1,\ldots,Y_{n-1})$ and $R_{n-1}(X_1,\ldots,X_{n-1};Y_1,\ldots,Y_{n-1})$, respectively, with the same recursion.
For the left-hand side, we have 
$$
L_n(\mathbb{X};\mathbb{Y})
=  \sum_{i=1}^{n} (-1)^{i+1}  \left( \prod_{k=1}^n (Y_k-X_i)  \right) 
L_{n-1}(X_1,\ldots,\widehat{X_i},\ldots,X_n;Y_1,\ldots,\widehat{Y_i},\ldots,Y_n),
$$
where $\widehat{X_i}$ and $\widehat{Y_i}$ means that $X_i$ and $Y_i$ are omitted.
In order to deal with the right-hand side, we first observe
\begin{equation}
\label{fundamentalidentity}
\sum_{j=0}^{n} (Y_i^j - X_i^j) e_{n-j}(-Y_1,\ldots,-Y_n) = (-1)^{n-1} \prod_{k=1}^{n} (Y_k-X_i),
\end{equation}
where $e_{j}(Y_1,\ldots,Y_n)$ denotes the $j$-th elementary symmetric function. Note that the summand for $j=0$ on the left-hand side is actually $0$, and can therefore be omitted.
Now consider the following system of linear equations with $n$ unknowns $c_j(\mathbb{X};\mathbb{Y})$, $1 \le j \le n$, and 
$n$ equations.
$$
\sum_{j=1}^{n} (Y_i^j - X_i^j)  c_j(\mathbb{X};\mathbb{Y})
= (-1)^{n-1} \prod_{k=1}^{n} (Y_k-X_i), \quad 1 \le i \le n.
$$
The determinant of this system of equations is obviously $R_n(\mathbb{X};\mathbb{Y})$ and can be assumed to be non-zero. By \eqref{fundamentalidentity}, we know that the unique solution of this system is given by 
$
c_j(\mathbb{X};\mathbb{Y}) = e_{n-j}(-Y_1,\ldots,-Y_n).
$
On the other hand, by Cramer's rule, 
$$
c_n(\mathbb{X};\mathbb{Y}) = \frac{\det \limits_{1 \le i, j \le n} \left( \begin{cases}  Y_i^j - X_i^j, & \text{if $j<n$} \\
  (-1)^{n-1} \prod\limits_{k=1}^{n} (Y_k-X_i) , & \text{if $j=n$}  \end{cases} \right)}{R_n(\mathbb{X};\mathbb{Y})}.
$$
The assertion now follows from $c_n(\mathbb{X};\mathbb{Y}) = e_{0}(-Y_1,\ldots,-Y_n)=1$ and expanding the determinant in the numerator with respect to the last column.
\end{proof}

We also provide a combinatorial proof. For this purpose, we need a number of definitions to reformulate the problem so that it is accessible from a combinatorial point of view.

Replacing $X_i \to -X_i$, we need to show 
\begin{equation}
\label{reform} 
\widehat{\asym}  \left[  \prod_{1 \le i \le j \le n} (X_i+Y_j)     \right] =
\det_{1 \le i, j \le n} \left( Y_i^j + (-1)^{j+1} X_i^j \right).
\end{equation} 

Let $L_n$ denote the graph that is obtained from the complete simple graph on the vertex set $\{1,2,\ldots,n\}$ by adding one loop at each vertex. We consider orientations of $L_n$ and imagine the vertices $1,2,\ldots,n$ to be arranged on a horizontal line. We say an edge is oriented from left to right if it is oriented from the smaller vertex $i$ to the larger vertex $j$ (and write $i \rightarrow j$) and from right to left otherwise ($i \leftarrow j$).
It will be convenient to have two possible orientations for loops also, say, from left to right (indicated as $i \rightarrow i$) and from right to left (indicated as $i \leftarrow i$), so that there are in total $2^{\binom{n+1}{2}}$ orientations of $L_n$.  The set of all orientations of $L_n$ is denoted by $\mathcal{O}_n$. An example is provided in Figure~\ref{orient}. 

Now each monomial in the expansion of $\prod_{1 \le i \le j \le n} (X_i+Y_j)$ clearly corresponds to an orientation of $L_n$ as follows: For $i \le j$, we let $i \rightarrow j$ if we pick $X_i$ in $X_i+Y_j$ and $i \leftarrow j$ if we pick $Y_j$. Thus, the weight of an orientation $O \in \mathcal{O}_n$ is defined as
$$
w(O)=\prod_{i=1}^{n} X_i^{\# \{j \ge i:i \rightarrow j\}} Y_i^{\# \{j \le i:j \leftarrow i\}},
$$
so that
$\sum_{O \in \mathcal{O}_n} w(O) = \prod_{1 \le i \le j \le n} (X_i+Y_j)$. The weight in our example is 
$X_1^7 X_2^5 X_3^4 X_4 Y_5^2 Y_6^3 Y_7^6$. 

\begin{figure}
\scalebox{0.4}{\includegraphics{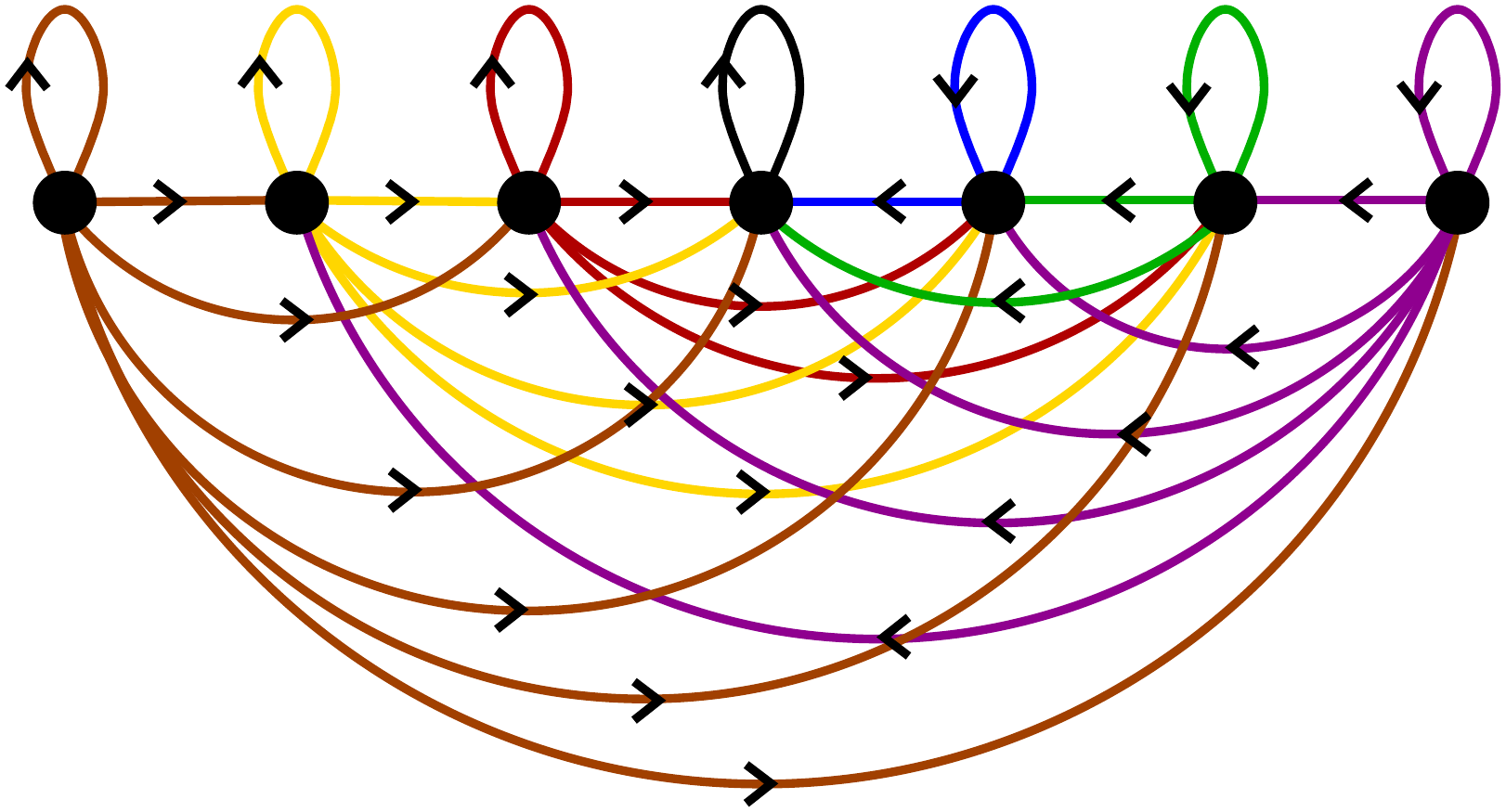}}
\caption{\label{orient} An orientation of $L_7$ that is in $\mathcal{P}_7$.} 
\end{figure} 

We consider a subset $\mathcal{P}_n$ of orientations in $\mathcal{O}_n$ that will provide a combinatorial interpretation for the 
right-hand side of \eqref{reform}. The definition is recursive: We have $\mathcal{P}_1 = \mathcal{O}_1$, and, for $n>1$, $\mathcal{P}_n$ is partitioned into two set: 
\begin{itemize} 
\item either all edges incident with $n$ are oriented away from $n$ (necessarily to the left) and the restriction of the orientation to $\{1,2,\ldots,n-1\}$ is in $\mathcal{P}_{n-1}$, 
\item 
or all edges incident with $1$ are oriented away from $1$ 
(necessarily to the right) and the restriction of the orientation to $\{2,3,\ldots,n\}$ is in $\mathcal{P}_{n-1}$ with vertices renamed through a shift by $1$. 
\end{itemize} 
There are clearly $2^{n}$ such orientations in $\mathcal{P}_{n}$ and the orientation 
in Figure~\ref{orient} is in $\mathcal{P}_7$.

Orientations in $\mathcal{P}_{n}$ can be encoded by a linear order of the vertices $1,2,\ldots,n$ that is induced by the inductive build-up of the orientations together with the orientation of the loop of the first vertex in the list. In the example in Figure~\ref{orient}, the order is $4 \, 5 \, 6 \, 3 \, 2 \, 7 \, 1$. This encoding has the following features.
\begin{itemize} 
\item Each vertex in the list is either greater than all its predecessors in the list or smaller than all its predecessors. 
\item The orientation is obtained from the list as follows: The edges are oriented away from each vertex to its predecessors in the linear order, and the loop of a vertex different from the first vertex in the list is oriented 
from left to right if this vertex is smaller than all its predecessors and from right to left otherwise. The orientation of the loop of the first vertex is given.
\item The weight can easily be computed as follows: The exponent of $X_i$ or $Y_i$ is the position of vertex $i$ in the list.
\end{itemize}

It follows that for each orientation in $\mathcal{P}_n$, the set $\{1,2,\ldots,n\}$ can be partitioned into maximal intervals of integers that are either added consecutively ``from above'' (upper sections) or added consecutively ``from below'' (lower sections) in the recursive procedure.  
More formally, there is a strictly increasing sequence of integers $i_0 < i_1 < i_2 < \ldots < n+1=i_s$ and a strictly decreasing sequence of integers $i_0-1=j_0 > j_1 > j_2 > \ldots > 0=j_t$ such that the linear order is 
$$
\color{red} i_0, i_0+1,i_0+2,\ldots,i_1-1, \color{blue} j_0,j_0-1,\ldots,j_1+1, \color{red} i_1,i_1+1,\ldots,i_2-1, \color{blue} j_1,j_1-1,\ldots,j_2+1,\color{red} \ldots \color{black} .
$$
The vertices greater than $i_0$ have their outgoing edges all to the left, while the vertices smaller than $i_0$ have their outgoing edges all to the right. The intervals $[i_0,i_1-1],[i_1,i_2-1],[i_2,i_3-1],\ldots$ are said to be the upper sections, while the intervals $[j_1+1,j_0],[j_2+1,j_1],[j_3+1,j_2],\ldots$ are said to be the lower sections. The only exceptional case happens if $i_0 \rightarrow i_0$: If $i_1=i_0+1$, then $[j_1+1,i_0]$ is a lower section and if $i_1>i_0+1$, then $[i_0,i_0]$ is a lower section and $[i_0+1,i_1-1]$ is an upper section. In our example, $(i_0,i_1,i_2)=(4,7,8)$ and $(j_0,j_1,j_3)=(3,1,0)$. Here we are in the exceptional case, so that $[1,1],[2,3],[4,4]$ are the lower sections and $[5,6], [7,7]$ are the upper sections.  

The claim \eqref{reform}  is equivalent to 
\begin{equation}
\label{reform1}  
\widehat{\asym}  \left[ \sum_{O \in \mathcal{O}_n} w(O) \right] = \widehat{\asym}  \left[ \sum_{O \in \mathcal{P}_n} w(O) \right]. 
\end{equation} 
In order to see this equivalence, we need to show 
$$
\widehat{\asym}  \left[ \sum_{O \in \mathcal{P}_n} w(O) \right] = \det_{1 \le i, j \le n} \left( Y_i^j + (-1)^{j+1} X_i^j \right), 
$$
which we do by induction with respect to $n$. The case $n=1$ is easy. By definition, 
$$
 \sum_{O \in \mathcal{P}_n} w(O) = Y_n^{n}  \sum_{O \in \mathcal{P}_{n-1}} w(O) + 
X_1^{n} (1,2,\ldots,n) \left[ \sum_{O \in \mathcal{P}_{n-1}} w(O) \right], 
$$
where $(1,2,\ldots,n)$ denotes the cyclic permutation that sends $i \to i+1 \hspace{-2mm} \mod n$ and acts on $X_i$ and $Y_i$ simultaneously. Therefore, 
\begin{multline*} 
\widehat{\asym} \left[ \sum_{O \in \mathcal{P}_n} w(O) \right] 
\\ =
\sum_{\sigma \in \mathcal{S}_n} \sgn \sigma \cdot \sigma \left[ Y_n^{n}  \sum_{O \in \mathcal{P}_{n-1}} w(O) \right] +
 \sum_{\sigma \in \mathcal{S}_n} \sgn \sigma \cdot \sigma \left[ X_1^{n} (1,2,\ldots,n) \left[ \sum_{O \in \mathcal{P}_{n-1}} w(O) \right] \right]. 
\end{multline*} 
By the induction hypothesis, this is equal to 
\begin{multline*} 
\sum_{k=1}^n (-1)^{n+k} Y_k^n \det_{i \in \{1,2,\ldots,n\} \setminus \{k\} \atop 1 \le j \le n-1} (Y_i^j + (-1)^{j+1} X_i^j) \\+ 
\sum_{k=1}^n (-1)^{1+k} X_k^n \det_{i \in \{1,2,\ldots,n\} \setminus \{k\} \atop 1 \le j \le n-1} (Y_i^j + (-1)^{j+1} X_i^j) \\
= \sum_{k=1}^n (-1)^{n+k} (Y_k^n + (-1)^{n+1} X_k^n)  \det_{i \in \{1,2,\ldots,n\} \setminus \{k\} \atop 1 \le j \le n-1} (Y_i^j + (-1)^{j+1} X_i^j)=\det_{1 \le i, j \le n} \left( Y_i^j + (-1)^{j+1} X_i^j \right), 
\end{multline*} 
where the last equality follows from expanding with respect to the last column.

Rephrasing \eqref{reform1}, we need to show 
\begin{equation}
\label{zero}
\widehat{\asym}  \left[ \sum_{O \in \mathcal{R}_n} w(O) \right] = 0, 
\end{equation} 
with $\mathcal{R}_n = \mathcal{O}_n \setminus \mathcal{P}_n$, and we provide a combinatorial proof for this identity.

\begin{proof}[Combinatorial proof of \eqref{zero}]
It suffices to find an involution on $\mathcal{R}_n$ such that when orientation $O_1$ is paired with $O_2$ under this involution, then there exists a transposition $\tau \in \mathcal{S}_n$ with 
$w(O_2) = \tau \, w(O_1)$. 

We will use of the following notation: For an orientation $O \in \mathcal{O}_n$ and a subset 
$S \subseteq [n]$, we let $O|_{S}$ denote the restriction of $O$ to the subgraph of $L_n$ induced by $S$. We may also identify this with an element of $\mathcal{O}_{|S|}$ in a natural way, i.e., by using the isomorphism between 
$L_{|S|}$ and the restriction of $L_n$ to $S$ that is induced the unique order-preserving bijection between $[|S|]$ and $S$. 

Now suppose that $O \in \mathcal{R}_n$ and let $m$ be minimal such that $O|_{[m]} \in \mathcal{R}_{m}$. It follows that 
$O|_{[m-1]} \in \mathcal{P}_{m-1}$. When referring to lower sections in the following, we mean lower sections of the restriction of $O|_{[m-1]}$.  First we get rid of the following case.

\medskip

{\bf Step 1. There is a lower section $[p,q]$ and an integer $k$ with $p \le k  < q$ such that $k \leftarrow m$ and $k+1 \rightarrow m$.} 

The weight of $O|_{[m]}$ is invariant under applying the transposition $(k,k+1)$: for $r \in \{1,2,\ldots,m-1\} \setminus \{k,k+1\}$, the edges $\{k,r\},\{k+1,r\}$ have the same orientation, since they are in the same lower section. The weight that comes from the restriction to 
$\{k,k+1,m\}$ is either $X_k^2 X_{k+1}^2 X_m Y_m$ (if $m \rightarrow m$) or  $X_k^2 X_{k+1}^2 Y_m^2$ (if $m \leftarrow m$). We ``exchange the neighbourhoods'' of $k,k+1$ in $\{m+1,m+2,\ldots,n\}$: For all $j \in \{m+1,m+2,\ldots,n\}$, we have $k \rightarrow j$ in the new orientation iff $k+1 \rightarrow j$ in the old orientation, and we have $k \leftarrow j$ in the new orientation iff $k+1 \leftarrow j$ in the old orientation. The transposition $\tau$ is equal to $(k,k+1)$. 

Clearly, the so-obtained orientation is again of the same type (i.e., there is a lower section with such an integer $k$), and the map is an involution.

\medskip

Therefore, we can assume from now on that for each lower section $[p,q]$, there is a $k$ with $p-1 \le k \le q$ such that $p,p+1,\ldots,k \rightarrow m$ and $k+1,k+2,\ldots,q \leftarrow m$. We say that a lower section is normal if this is satisfied.

The idea of the remainder of the proof is roughly as follows: In the restriction $O|_{[m-1]}$, we consider for each vertex the number of left-pointing edges. From right to left, this is a strictly decreasing sequence of numbers, until these numbers are eventually $0$ for the remaining vertices. We compare them to the number of left-pointing edges from $m$. The typical case is that this number is between the numbers for two adjacent vertices $i,i+1$ in $\{1,2,\ldots,m-1\}$. It is then possible to let $\tau=(i,m)$ or $\tau=(i+1,m)$. Which of the two cases has to be chosen depends on the lower section between $i$ and $i+1$ in the total order of $1,2,\ldots,m-1$, more precisely on the $k$ just described that ``makes'' it into a normal section. The non-typical exceptional cases (such as for instance when $m$ has no left-pointing edges) makes the proof  involved.

\medskip

In the following, we let $\ell_i$ denote the number of left-pointing edges away from $i$. Next we rule out the following case.

{\bf  Step 2. There is an $i \in \{1,2,\ldots,m-1\}$ with $0 \not= \ell_i=\ell_m$.} 

We need to consider two cases here. 

\emph{Case 1: $i \leftarrow m \leftarrow m$ or $i \rightarrow m \rightarrow m$.} Note that within $\{1,2,\ldots,m\}$ the contribution of the vertices $i$ and $m$ to the weight is $Y_i^{\ell_m} Y_m^{\ell_m}$ in the first case and $X_i X_m Y_i^{\ell_m} Y_m^{\ell_m}$ in the second case. We only need to exchange the neighbourhood of $i$ and $m$ for vertices in $\{m+1,m+2,\ldots,n\}$. 

\emph{Case 2: $i \leftarrow m \rightarrow m$ or $i \rightarrow m \leftarrow m$.} Note that within $\{1,2,\ldots,m\}$ the contribution of $i$ and $m$ to the weight is $X_m Y_i^{\ell_m} Y_m^{\ell_m}$ in the first case and $X_i Y_i^{\ell_m} Y_m^{\ell_m}$ in the second case. We transform the cases into one another, and exchange the neighbourhoods of $i$ and $m$ in the vertex set $\{m+1,m+2,\ldots,n\}$. 

The transposition $\tau$ is equal to $(i,m)$. Note that orientations of edges incident with vertices in lower sections are not changed, and, therefore, all lower sections are still normal. Also note that we clearly stay within the type of orientations under consideration since the number of left-pointing edges from $i$ and $m$ does not change. The map is clearly an involution. 

\medskip

The only case that remains is the following.

\smallskip

{\bf  Step 3. We have $\ell_m \not= \ell_i$ for all $i \in [m-1]$ or $\ell_m=0$.} 

Since $O|_{[m-1]} \in \mathcal{P}_{m-1}$, we have $\ell_{m-1} > \ell_{m-2} > \ldots > \ell_{t} >0$, where $t$ is the smallest integer in an upper section (setting $t=\infty$ if $t$ does not exist). The case $\ell_m=0$ as well as some instances of the cases that 
$\ell_m > \ell_{m-1}$ and $\ell_{t} > \ell_m$  are dealt with after Cases A and Cases B.

\medskip

For now we assume that there exist 
$i,i+1 \in \{t,t+1,\ldots,m-1\}$ such that $\ell_{i+1} > \ell_{m} > \ell_{i}$. The transposition $\tau$ will be either $(i,m)$ or $(i+1,m)$. Let $[p,q]$ be the lower section that appears in the linear order of $[m-1]$ induced by $O|_{[m-1]}$ 
between $i$ and $i+1$ (which are by assumption contained in different upper sections, since $\ell_{i+1}-\ell_{i}>1$) so that this part of the linear order reads as
$$
i,q,q-1,\ldots,p,i+1,
$$ 
and let $k$ be such that $p,p+1,\ldots,k \rightarrow m$ and $k+1,k+2,\ldots,q \leftarrow m$ (such a $k$ exists because all lower sections are normal).  Since $(\ell_{i+1}-\ell_m) + (\ell_m-\ell_i) = \ell_{i+1}-\ell_i = q-p+2=(q-k)+(k-p+1)+1$, we have either $\ell_m-\ell_i \le q-k$ or $\ell_{i+1}-\ell_m \le k-p+1$ but not both. 

\medskip

\emph{Case A: $\ell_m-\ell_{i} \le q-k$} 

In this case, we change the linear order for $O|_{[m-1]}$ so that 
$$
i,q,q-1,\ldots,p,i+1 \Rightarrow q,q-1,\ldots,q-(\ell_m-\ell_{i})+1, i,q-(\ell_m-\ell_i),\ldots,p,i+1
$$ 
to the effect that $X_q X_{q-1} \ldots X_{q-(\ell_m-\ell_i)+1}$ in the weight is replaced by $Y_i^{\ell_m-\ell_i}$
and change 
$$
q-(\ell_m-\ell_{i})+1,q-(\ell_m-\ell_{i})+2,\ldots,q \leftarrow m \Rightarrow q-(\ell_m-\ell_{i})+1,q-(\ell_m-\ell_{i})+2,\ldots,q \rightarrow m, 
$$
to the effect that $Y_m^{\ell_m-\ell_i}$ in $Y_m^{\ell_m} = Y_m^{\ell_i} Y_m^{\ell_m-\ell_i}$ is replaced by 
$X_q X_{q-1} \ldots X_{q-(\ell_m-\ell_i)+1}$. 

In addition, in analogy to Case~2, we transform the case  
$i \leftarrow m \rightarrow m$  into $i \rightarrow m \leftarrow m$, and vice versa. There is no such transformation if $i \leftarrow m \leftarrow m$ or $i \rightarrow m \rightarrow m$ (as in Case~1). Finally, we exchange the neighbourhood of $i$ and $m$ in $\{m+1,m+2,\ldots,n\}$.

Note that still all lower sections are normal and the transposition $\tau$ is equal to 
$(i,m)$. 

We apply this case also if $i=m-1$ (but still $\ell_m-\ell_{m-1} \le q-k$). As $\ell_m > \ell_i > 0$, we automatically exclude 
$\ell_m=0$ here.

\medskip

\emph{Case B:  $\ell_{i+1}-\ell_m \le k-p+1$} 

In this case, we change the linear order for $O|_{[m-1]}$ so that 
$$
i,q,q-1,\ldots,p,i+1 \Rightarrow i,q,q-1,\ldots,p+\ell_{i+1}-\ell_m,i+1,p+\ell_{i+1}-\ell_m-1\ldots,p+1,p
$$
to the effect that $Y_{i+1}^{\ell_{i+1}-\ell_m}$ in $Y_{i+1}^{\ell_{i+1}} = Y_{i+1}^{\ell_{i+1}-\ell_m} Y_{i+1}^{\ell_m}$ is replaced by $X_p X_{p+1} \ldots X_{p+l_{i+1}-\ell_m-1}$ 
and change 
$$
p,p+1,\ldots,p+\ell_{i+1}-\ell_m-1 \rightarrow m \Rightarrow p,p+1,\ldots,p+\ell_{i+1}-\ell_m-1 \leftarrow m
$$
to the effect that $X_p X_{p+1} \ldots X_{p+\ell_{i+1}-\ell_m-1}$ is replaced by $Y_m^{\ell_{i+1}-\ell_m}$. In addition, we have 
again $i+1 \leftarrow m \rightarrow m \Leftrightarrow  i+1 \rightarrow m \leftarrow m$, and exchange the neighbourhood of $i+1$ and $m$ in 
$\{m+1,m+2,\ldots,n\}$.

Again all lower sections are still normal and the transposition $\tau$ is
$(i+1,m)$. 

We apply this case also if $i+1=t$ (but still $\ell_{t}-\ell_m \le k-p+1$). As $\ell_{t} \ge q-p+2 > k-p+1$, we automatically exclude $\ell_m=0$ also here.

We leave it to the reader to check that  Cases A and B ``match each other'': if we start with an orientation that falls under Case~A, it is transformed into one that falls under Case~B, and is then transformed into the original orientation, and vice versa. Therefore, we only need to figure out which cases are left and find an involution with the required property on them.

\medskip

{\bf Step 4.} We claim that the following two types are left.

\begin{enumerate}
\item $\ell_m=0$
\item Suppose $q$ is the first element in the list of the encoding of $O|_{[m-1]}$, then $q \rightarrow q$ 
and for the rightmost lower section $[p,q]$, there exists a $k$ with $p-1 \le k < q$ such that $i \leftarrow m$ iff $i \in [k+1,q]$. 
\end{enumerate} 

We will see that these cases are turned into one another under our involution. There is also no intersection as $\ell>0$ in the second case, since $[k+1,q]$ is not empty.

\medskip

{\bf (1) and (2) have not been dealt before:} This is obvious for (1). As for (2), we have that $\ell_m < \ell_t$ or $t=\infty$: if $t\not=\infty$, then $t=q+1$, $\ell_{q+1}=q-p+2$ and $\ell_m=q-k < q-p+2$, so it suffices to check $\ell_{q+1} - \ell_m  > k-p+1$ (because otherwise the case would have been dealt with in Case B), which is obviously satisfied. On the other hand, if $t=\infty$, then this case has also not been dealt with in Cases A and B.  

\medskip

{\bf There are no more cases to consider than (1) and (2):}  The cases that have not been dealt with before are (a) $\ell_m=0$, (b) $t = \infty$, (c) $\ell_m > \ell_{m-1}$ but not already covered Case A, and (d) $\ell_m < \ell_t$ but not already covered by Case B.

All cases with $\ell_m=0$ are still there. If $t=\infty$, then $[1,m-1]$ is the rightmost lower section in this case, and there exists a 
$k$ with $0 \le k \le m-1$ such that $1,2,\ldots,k \rightarrow m$ and $k+1,\ldots,m-1 \leftarrow m$ because $[1,m-1]$ is normal. The fact that the restriction to 
$\{1,2,\ldots,m\}$ is in $\mathcal{R}_m$ implies $m \rightarrow m$ and we can assume $k<m-1$ because otherwise $\ell_m=0$ and that is already covered. This is then covered by (2). 

If $t \not= \infty$ and $\ell_m > \ell_{m-1}$, we still need to consider the case $\ell_m-\ell_{m-1}>q-k$, because it has not been dealt with in Case A. We will show that this case can actually not happen. Let $[1,q]$ be the lower section after $m-1$ and, as usual, $0 \le k \le q$ such that 
$1,2,\ldots,k \rightarrow m$, $k+1,\ldots,q \leftarrow m$. Now $\ell_{m-1}=m-1-q$, so that $\ell_m-\ell_{m-1} > q-k$ is equivalent to $\ell_m > m-1-k$ and therefore $\ell_m \ge m-k$. As $1,2,\ldots,k \rightarrow m$, this implies $k+1,k+2,\ldots,m \leftarrow m$ (because these are $m-k$ edges) and $O|_{[m]} \in \mathcal{P}_m$, a contradiction.

If $t \not= \infty$ and $0 \not= \ell_m < \ell_t$, but the case is not covered by Case B.  Let 
$[p,q]$ be the lower section that appears in the linear order just before $t$, and let $p,p+1,\ldots,k \rightarrow m$ and 
$k+1,k+2,\ldots,q \leftarrow m$. Since $t$ is leftmost, $[p,q]$ is also the rightmost lower section and $\ell_t=q-p+2$ and $t=q+1$.
We can assume $\ell_t-\ell_m > k-p+1$ (because otherwise we are in Case B), so therefore $(q-p+2)-\ell_m > k-p+1$, which implies   
$q-k+1>\ell_m$, but since $k+1,k+2,\ldots,q \leftarrow m$ we have $\ell_m=q-k$, so that the left-pointing edges from $m$ hit 
precisely $k+1,\ldots,q$. We have $k<q$ since $\ell_m>0$. This is covered by (2). 

\bigskip

{\bf Now we show how (1) and (2) are turned into one another.} 

\medskip

Suppose we are in (1). Since $\ell_m=0$, we have $t \not= \infty$ because otherwise $O|_{[m]}$ has only right-pointing edges and would be contained in $\mathcal{P}_m$.
Let $[p,q]$ be the lower section that precedes $t$ (so that $t=q+1$), which is clearly the rightmost lower section.
The linear order of the vertices in $[m-1]$ starts as $q, q-1,\ldots,p,q+1$ and we change this to 
$q+1,q,q-1,\ldots,p$ with $q+1 \rightarrow q+1$. This replaces $Y_{q+1}^{q-p+2}$  with $X_p X_{p+1} \cdots X_{q+1}$. Moreover, we change 
$p,p+1,\ldots,q+1 \rightarrow m$ to $p,p+1,\ldots,q+1 \leftarrow m$, which replaces $X_p X_{p+1} \cdots X_{q+1}$ with 
$Y_m^{q-p+2}$. Summarizing, one weight is obtained from the other by applying the transposition $(q+1,m)$ when restricting to $[m]$. We exchange the neighbourhood of $q+1$ and $m$ in $\{m+1,m+2,\ldots,n\}$.

\medskip

Suppose we are in (2). Then the linear order of the vertices in $[m-1]$ starts as $q, q-1, \ldots, k+1$ and we change 
this to $q-1, q-2, \ldots,k+1, q$. This replaces $X_{k+1} X_{k+2} \ldots X_q$ with $Y_q^{q-k}$. We also change 
$k+1,k+2,\ldots,q \leftarrow m$ to $k+1,k+2,\ldots,q \rightarrow m$, so that $Y_m^{q-k}$ is replaced by  
$X_{k+1} X_{k+2} \ldots X_q$. So one weight is obtained from the other by applying the transposition $(q,m)$ when restricting to $[m]$. We exchange the neighbourhood of $q$ and $m$ in $\{m+1,m+2,\ldots,n\}$.
\end{proof}


\section{The Schur expansion of the symmetric generating function}
\label{sec: proof of mt 1}

In order to prove Theorem~\ref{thm: main thm 1}, we first derive an explicit expansion of the symmetric generating function into Schur polynomials. Second, we prove that the coefficients of each Schur polynomial satisfy the same recursion as the right hand side of \eqref{eq: main thm 1}.
Let $\asym$ denote the antisymmetrizer, i.e.,
\[
\asym_{\x}  f(\mathbf{x}) = \sum\limits_{\sigma \in {\mathcal S}_n} \sgn(\sigma) \allowbreak \cdot f(x_{\sigma(1)},\ldots,x_{\sigma(n)})
.
\] 
We can rewrite the classical bialternant formula for Schur polynomials using the antisymmetriser and obtain for the operator formula in \eqref{eq: sym gen fct via Operatoren}
\begin{multline}
 \left.\prod_{1 \leq i < j \leq n} \left(u E_{\lambda_i}  + w E_{\lambda_i}E_{\lambda_j}^{-1} +  v E_{\lambda_j}^{-1}  \right) s_{(\lambda_n,\ldots,\lambda_1)}(\x)\right|_{\lambda_i=i-1} \\
=  \left. \prod_{1 \leq i < j \leq n} \left(u E_{\lambda_i}  + w E_{\lambda_i}E_{\lambda_j}^{-1} +  v E_{\lambda_j}^{-1}  \right) \frac{\asym_\x \left(\prod\limits_{i=1}^n x_i^{\lambda_i+i-1}\right)}{\prod\limits_{1 \leq i < j \leq n}(x_j-x_i)} \right|_{\lambda_i=i-1}\\
=   \frac{\asym_\x \left[
\prod\limits_{1 \leq i < j \leq n} \left(u x_i  + w x_i x_j^{-1} +  v x_j^{-1}  \right) \prod\limits_{i=1}^n x_i^{2(i-1)} \right]}{\prod\limits_{1 \leq i < j \leq n}(x_j-x_i)}, \label{eq: sym gen fct via antisym}
\end{multline}
where we used that $E_{\lambda_i}$ applied to $x_i^{\lambda_i}$ is equal to multiplication by $x_i$. By multiplying the $(i,j)$-th factor in the product with $x_i^{-1} x_j$ and some further manipulation, we arrive at
\begin{multline*}
\prod\limits_{i=1}^n \left( \frac{ x_i^{n-1}}{\frac{ v}{x_i} + w + u x_i} \right)
   \frac{ \asym_{\x} 
\left[  \prod\limits_{1 \le i \le  j \le n} \left(\frac{ v}{x_i} + w + u x_j\right) \right]}{ \prod\limits_{1 \le i < j \le n} (x_j - x_i)}
=\\
\prod\limits_{i=1}^n \left( \frac{ x_i^{n-1}}{\frac{ v}{x_i} + w + u x_i} \right)
   \frac{ \asym_{\x} 
\left[  \prod\limits_{1 \le i \le  j \le n} \left(\frac{ v}{x_j} + w + u x_i\right) \right]}{ \prod\limits_{1 \le i < j \le n} (x_i - x_j)},
\end{multline*}
where we replaced $x_i$ by $x_{n+1-i}$ for all $i$ in both the numerator and denominator.
We apply Lemma \ref{lem: general} for $X_i=-w-u x_i$ and $Y_i=v x_i^{-1}$, 
and obtain
\begin{equation}
\label{eq: sym gen fct via det}
\frac{\det_{1 \leq i,j \leq n}\left( x_i^{n-j}p_j(x_i) \right)}{\prod_{1 \leq i < j \leq n}(x_i-x_j)},
\end{equation}
where 
\[
p_j(x) = x^{j-1}\frac{v^j x^{-j}-\left( -w-u x\right)^j}{(v x^{-1}+w+u x)}
= \sum\limits_{k=0}^{j-1}x^k(-w-u x)^k  v^{j-k-1}.
\]
 To emphasise the general principle used to express the determinantal expression in \eqref{eq: sym gen fct via det} as a sum of Schur polynomials, we consider $q_j(x)$ to be a family of polynomials $q_j(x):= \sum_{k\geq 0} a_{j,k} x^k$.
Using the linearity of the determinant in the columns, we have
\begin{equation}
\label{eq: det to Schur I}
\frac{\det\limits_{1 \leq i,j \leq n}\left( x_i^{n-j}q_j(x_i) \right)}{\prod\limits_{1 \leq i< j \leq n}(x_i-x_j)}
= \sum\limits_{k_1,\ldots,k_n \geq 0} \left( \prod_{j=1}^n a_{j,k_j} \right) s_{(k_1,\ldots, k_n)}(\mathbf{x}),
\end{equation}
where we used the well known extension of Schur polynomials to arbitrary sequences $L=(L_1,\ldots,L_n)$ of non-negative integers via
\[
s_{L}(\mathbf{x}) := \frac{\det\limits_{1 \leq i,j \leq n}\left( x_i^{L_j+n-j}\right)}{\prod\limits_{1 \leq i < j \leq n}(x_i-x_j)}.
\]
It can be checked that the generalised Schur polynomial $s_L(\mathbf{x})$ is either equal to $0$ or $s_L(\mathbf{x})= \sgn(\sigma) s_\lambda(\mathbf{x})$ where $\lambda=(\lambda_1,\ldots, \lambda_n)$ is a partition 
and $\sigma \in S_n$ is a permutation such that $L_j=\lambda_{\sigma(j)}+j-\sigma(j)$ for all $1 \leq j \leq n$. It follows that \eqref{eq: det to Schur I} is equal to
\begin{equation}
\label{eq: det to Schur II}
 \sum_{\lambda} s_\lambda(\mathbf{x}) \left( \sum_{\sigma \in S_n} \sgn(\sigma) \prod_{j=1}^n a_{j,\lambda_{\sigma(j)}+j-\sigma(j)}  \right)
= \sum_{\lambda} s_\lambda(\mathbf{x}) \det\limits_{1 \leq i,j \leq n} \left( a_{j,\lambda_i+j-i} \right),
\end{equation}
where the sum is over all partitions $\lambda$.
By applying \eqref{eq: det to Schur II} to the family of polynomials 
$$
p_{j}(x) 
=\sum\limits_{k=0}^{j-1}x^k(-w-u x)^k  v^{j-k-1}
= \sum\limits_{k=0}^{j-1} \sum\limits_{l\geq 0}
(-1)^{k}\binom{k}{l} x^{k+l}   u^lv^{j-k-1} w^{k-l}
,
$$
we obtain
\begin{multline*}
\sum_{A \in \ASM_n}\omega_A(u,v,w;\x) =
\sum_\lambda s_\lambda(\mathbf{x}) \det\limits_{1 \leq i,j \leq n} \left( 
\sum\limits_{k=0}^{j-1}  \sum\limits_{l \geq 0 \atop k+l = \lambda_i+j-i}
(-1)^{k}\binom{k}{l}   u^lv^{j-k-1} w^{k-l}
\right)\\
= \sum_\lambda s_\lambda(\mathbf{x})  \det_{1 \leq i,j \leq n} \left(
\sum_{k=0}^{j-1} (-1)^{k}\binom{k}{\lambda_i+j-i-k} u^{\lambda_i+j-i-k} v^{j-k-1} w^{2k+i-\lambda_i-j}
\right).
\end{multline*}

We denote by $m_{i,j}(\lambda_i)$ the $(i,j)$-th entry of the matrix in the above determinant. An entry $m_{i,1}(\lambda_i)= \binom{0}{\lambda_i+1-i}u^{\lambda_i+1-i} w^{i-\lambda_i-1} $ in the first column is $1$ iff $\lambda_i=i-1$ and $0$ otherwise. Let $l$ be the side length of the Durfee square of $\lambda$. The only possible part of $\lambda$ satisfying $\lambda_i=i-1$ is the $(l+1)$-st. Hence we assume for the rest of the proof $\lambda_{l+1}=l$. By expanding the determinant along the first column, we obtain
\[
\det_{1 \leq i,j \leq n} \left(m_{i,j}(\lambda_i) \right) = (-1)^{l+2}\det_{1 \leq i,j \leq n-1} \left(m_{i,j}^\prime \right),
\]
where $(m_{i,j}^\prime)_{1,\leq,i,j \leq n-1}$ denotes the matrix obtained by deleting the first column and the $(l+1)$-st row of $(m_{i,j}(\lambda_i))_{1\leq i,j \leq n}$. For $1 \leq i \leq l$, in which case we have $\lambda_{i}\geq i$, we can rewrite $m_{i,j}^\prime$ as
\begin{multline*}
m_{i,j}^\prime =
\sum_{k=0}^{j} (-1)^k \binom{k}{\lambda_i+(j+1)-i-k}  u^{\lambda_i+(j+1)-i-k} v^{(j+1)-k-1} w^{2k+i-\lambda_i-(j+1)} \\
= \sum_{k=0}^{j-1} (-1)^{k+1}    u^{\lambda_i+j-i-k}v^{j-k-1} w^{2k+1+i-\lambda_i-j}
 \left( \binom{k}{\lambda_i+j-i-k}+ \binom{k}{\lambda_i+j-i-k-1} \right) \\
 = -w \, m_{i,j}(\lambda_i) - u \, m_{i,j}(\lambda_i-1).
\end{multline*}
For $i>l$ on the other hand, i.e., $\lambda_{i+1}<i$, we can express $m_{i,j}^\prime$ analogously as
\begin{multline*}
m_{i,j}^\prime = \sum_{k=0}^{j} (-1)^{k}\binom{k}{\lambda_{i+1}+(j+1)-(i+1)-k} \\
\times  u^{\lambda_{i+1}+(j+1)-(i+1)-k} v^{(j+1)-k-1} w^{2k+(i+1)-\lambda_{i+1}-(j+1)}
=  v m_{i,j}(\lambda_{i+1}),
\end{multline*}
where the sum has been extended, which is allowed since $\binom{j}{\lambda_{i+1}-i}=0$.
Summarising, we denote by $c_{n,\lambda}$ the coefficient of $s_\lambda(\mathbf{x})$ in the symmetric generating function $\sum_{A \in \ASM_n}\omega_A(u,v,w;\x)$. Then
\begin{multline*}
c_{n,\lambda} = (-1)^{l}\det_{1 \leq i,j \leq n-1} \left(m_{i,j}^\prime \right) 
 = (-1)^{l}\det_{1 \leq i,j \leq n-1} \left( 
\begin{cases}
-w m_{i,j}(\lambda_i)- u m_{i,j}(\lambda_i-1), \quad & i \leq l\\
 v m_{i,j}(\lambda_{i+1}), & i>l
\end{cases} \right) \\
= \sum_{(f_1,\ldots,f_l) \in \{0,1\}^l} u^{\sum_{i=1}^l f_i}  v^{n-1-l} w^{l-\sum_{i=1}^l f_i}
  c_{n-1,(\lambda_1 -f_1,\ldots,\lambda_l-f_l,\lambda_{l+2},\ldots,\lambda_n)},
\end{multline*}
with $c_{n-1,(\lambda_1 -f_1,\ldots,\lambda_l-f_l,\lambda_{l+2},\ldots,\lambda_n)}=0$ if $(\lambda_1 -f_1,\ldots,\lambda_l-f_l,\lambda_{l+2},\ldots,\lambda_n)$ is not a partition, where the equality follows from the linearity of the determinant in the rows and choosing $f_i=0$ iff we select the first term in row $i$.
Using Frobenius notation for $\lambda=(a_1,\ldots,a_l|b_1,\ldots,b_l)$, the above recursion can be rewritten as
\begin{multline*}
c_{n,(a_1,\ldots,a_l|b_1,\ldots,b_l)} 
= \sum_{(f_1,\ldots,f_l) \in \{0,1\}^l} u ^{\sum_{i=1}^l f_i} v^{n-1-l} w^{l-\sum_{i=1}^l f_i}  c_{n-1,(a_1-f_1,\ldots,a_l-f_l|b_1-1,\ldots,b_l-1)},
\end{multline*}
where $c_{n-1,(a_1,\ldots,a_{l-1},-1|b_1,\ldots,b_{l-1},0)}$ is defined as $c_{n-1,(a_1,\ldots,a_{l-1}|b_1,\ldots,b_{l-1})}$ .
\bigskip

Denote by $d_{n,\lambda}$ the coefficient of $s_\lambda(\x)$ in $\A_{n,1}(1, u,v,w;\x)$. For $\lambda=(a_1,\ldots,a_l|b_1,\ldots,b_l)$, Proposition \ref{prop: TSSPP refinement} implies 
\begin{multline*}
d_{n,(a_1,\ldots,a_l|b_1,\ldots,b_l)} 
= u^{\sum_{i=1}^l (a_i+1)} v^{\binom{n}{2}-\sum_{i=1}^l b_i} w^{\sum_{i=1}^l(b_i-1-a_i)} 
 \det_{1 \leq i,j \leq l} \left( \binom{b_j-1}{a_i} \right)\\
= u^{\sum_{i=1}^l (a_i+1)} v^{\binom{n}{2}-\sum_{i=1}^l b_i} w^{\sum_{i=1}^l(b_i-1-a_i)} 
 \det_{1 \leq i,j \leq l} \left( \binom{b_j-2}{a_i}+\binom{b_j-2}{a_i-1} \right)\\
 =  \sum_{(f_1,\ldots,f_l) \in \{0,1\}^l}  u^{\sum_{i=1}^l f_i} v^{n-1-l} w^{l-\sum_{i=1}^l f_i} 
 d_{n-1,(a_1-f_1,\ldots,a_l-f_l|b_1-1,\ldots,b_l-1)},
\end{multline*}
where we used the linearity of the determinant in the last step. The assertion follows by induction on $n$ since both $c_{n,\lambda}$ and $d_{n,\lambda}$ satisfy the same recursion and the induction base can be checked easily. This proves Theorem~\ref{thm: main thm 1}.


\section{$\A_{n,k}$ and column strict shifted plane partitions}
\label{sec: CSSPPs}

Recall that a strict partition is a sequence $\lambda=(\lambda_1,\ldots,\lambda_n)$ of strictly decreasing positive integers. The {shifted Young diagram} of shape $\lambda$ has $\lambda_i$ cells in row $i$ and each row is indented by one cell to the right with respect to the previous row. The shifted Young diagram of the strict partition $(6,5,2)$ is as follows.
$$
\tiny
\ydiagram{0+6,1+5,2+2}
$$
A {column strict shifted plane partition}  (CSSPP) is a filling of a shifted Young diagram with positive integers such that rows decrease weakly and columns decrease strictly. 
Let $k$ be an integer, then a CSSPP is said to be of class $k$ if the first part of each row exceeds the length of its row by precisely $k$.
The following is a CSSPP of class $2$.
\[
\begin{array}{cc cc cc}
 8 & 8 & 7 & 6 & \cyan{3} & \cyan{2}\\
   & 7 & 5 & \cyan{2} & \cyan{1} & \cyan{1}\\
   &   & 4 & \cyan{1}
\end{array}
\]
For a CSSPP $\pi$ of class $k$, we define $\rho(\pi)$ as the number of rows of $\pi$ and $\mu(\pi)$ as the number of entries $\pi_{i,j} \leq k+j-i$. In the above example, the two statistics are $\rho(\pi)=3$ and $\mu(\pi)=6$, where the entries contributing to $\mu(\pi)$ are coloured blue. We define the function $\CSSPP_{n,k}(r,t)$ as the generating function
\[
\CSSPP_{n,k}(r,t)= \sum_{\pi}r^{\rho(\pi)}t^{\mu(\pi)},
\]
where the sum is over all CSSPP $\pi$ of class $k$ whose first row has at most $n$ entries. Using a lattice path description for CSSPPs and the Lindstr\"om-Gessel-Viennot theorem, we obtain the following determinantal formula for $\CSSPP_{n,k}(r,t)$. A detailed proof can be found in \cite[Lemma 5.1]{Aigner21}.

\begin{prop}
\label{prop: CSSPP gen fct}
Let $n$ be a positive integer and $k$ a non-negative integer. Then 
\[
\CSSPP_{n,k}(r,t)= \det_{0 \leq i,j \leq n-1} \left(\delta_{i,j} + r\sum_{l \geq 0}\binom{i}{l}\binom{j+k}{l+k}t^{j-l} \right).
\]
\end{prop}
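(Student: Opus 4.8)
The plan is to realise $\CSSPP_{n,k}(r,t)$ as the generating function of a family of non-intersecting lattice paths and then to invoke the Lindstr\"om-Gessel-Viennot theorem, exactly as in the proof of Proposition~\ref{prop: TSSPP refinement}. First I would encode each row of a CSSPP of class $k$ as a monotone lattice path: since the entries of a row are weakly decreasing, the row is equivalent to such a path, and the class-$k$ condition (the first entry of row $i$ exceeds the length of row $i$ by exactly $k$) pins down the starting height of the path in terms of its length. The crucial point is that the strict decrease along columns translates precisely into the non-intersection condition between the paths associated to consecutive rows, so that CSSPPs of class $k$ are in bijection with suitable families of non-intersecting paths.

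The number of rows is not fixed, so to match the shape $\det(\delta_{i,j}+r(\cdots))$ I would work with $n$ \emph{tracks} whose start- and endpoints are indexed by $0,1,\ldots,n-1$, reflecting the constraint that the first row has at most $n$ entries. Each track either carries a genuine path --- corresponding to an actual row, and contributing a factor $r$ --- or is empty, i.e., a stationary path from start $i$ to end $i$ contributing the factor $1$ recorded by $\delta_{i,j}$. Thus $\rho(\pi)$ equals the number of genuine paths, which explains the exponent of $r$. Because $\mu(\pi)$ counts the entries with $\pi_{i,j}\le k+j-i$ and this threshold depends only on the position within a row, the statistic $\mu$ is additive over rows; hence it factors as a product of per-path contributions and can be carried as a $t$-weight on the individual steps of each path.

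With this set-up the Lindstr\"om-Gessel-Viennot theorem immediately gives $\CSSPP_{n,k}(r,t)=\det_{0\le i,j\le n-1}(M_{i,j})$, where $M_{i,j}$ is the total $t$-weighted count of a single (possibly empty) path from start $i$ to end $j$, with the factor $r$ attached to the non-empty paths. It then remains to identify $M_{i,j}=\delta_{i,j}+r\sum_{l\ge 0}\binom{i}{l}\binom{j+k}{l+k}t^{j-l}$. The per-path generating function is obtained by splitting each row at the threshold diagonal $\pi_{i,j}=k+j-i$: the entries strictly above the threshold are not counted by $\mu$, while those at or below are counted and weighted by $t$. The two binomial factors enumerate the portion of the path above and the portion below the threshold, with $l$ recording the height at which the path crosses and the shift by $k$ coming from the class-$k$ normalisation; the exponent $j-l$ then records the number of entries contributing to $\mu$.

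I expect the main obstacle to be this last step: pinning down the exact lattice-path model so that a single non-empty path from $i$ to $j$ has weighted count precisely $\sum_{l\ge 0}\binom{i}{l}\binom{j+k}{l+k}t^{j-l}$. This requires a careful choice of the start- and endpoint coordinates, a correct treatment of the class-$k$ shift (the $j+k$ and $l+k$), and a verification that the position-dependent statistic $\mu$ reduces to the uniform exponent $j-l$ on each path. Once the single-path enumeration and the non-intersection dictionary are in place, the determinantal formula follows directly from the Lindstr\"om-Gessel-Viennot theorem; the full bookkeeping is carried out in \cite[Lemma 5.1]{Aigner21}.
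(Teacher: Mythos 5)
Your proposal follows the same route as the paper, which itself only sketches this argument (lattice-path encoding of the rows, non-intersection from the strict column decrease, principal-minor/LGV structure giving $\det(\delta_{i,j}+r M_{i,j})$) and defers the detailed single-path bookkeeping to \cite[Lemma 5.1]{Aigner21}, exactly as you do. The strategy and the identification of where the real work lies are both correct.
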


It is crucial for the proof of Theorem \ref{thm: main thm 2} to express $\A_{n,k}(r,1,1,t;\x)$ as a determinant. The next lemma gives a determinantal expression for the more general $\A_{n,k}(r,u,v,w;\x)$.

\begin{lem}
\label{lem: Ank via det}
Let $n \geq 2$ be an integer, then
\begin{equation}
\label{eq: Ank via det}
\A_{n,k}(r, u,v,w;\x) = \det_{0 \leq i,j \leq n-2} \left((-1)^{j-i}  v^{j+1} \binom{i}{j}+r  u^{i+1} w^{j-i}  s_{(i|j+k)}(\x)\right).
\end{equation}
\end{lem}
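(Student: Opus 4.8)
The plan is to convert the defining sum of $\A_{n,k}$ into a sum over $k$-tall partitions and then recognise it as the generalised Laplace expansion of the determinant in \eqref{eq: Ank via det}. By Proposition~\ref{prop: TSSPP refinement}, the number of $T\in\TSPP_{n-1}$ with prescribed $\pi_k(T)=\lambda=(a_1,\dots,a_l\mid b_1+k,\dots,b_l+k)$ equals $\det_{1\le i,j\le l}\binom{b_i}{a_j}$, and $\omega(T)$ depends only on $\diag(T)$. Hence
\[
\A_{n,k}(r,u,v,w;\x)=\sum_{\mathbf a,\mathbf b} v^{\binom n2-\sum_i(b_i+1)}\Big(\det_{1\le i,j\le l}\binom{b_i}{a_j}\Big)\,r^l u^{\sum_i(a_i+1)}w^{\sum_i(b_i-a_i)}\,s_\lambda(\x),
\]
the sum ranging over pairs of $l$-element subsets $\mathbf a=\{a_1>\dots>a_l\}$, $\mathbf b=\{b_1>\dots>b_l\}$ of $\{0,\dots,n-2\}$ with $l\ge 0$; the $k$-tall constraint $a_i\le b_i$ may be dropped since the binomial determinant vanishes otherwise.

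Next I would apply the Giambelli identity $s_\lambda=\det_{1\le i,j\le l}s_{(a_i\mid b_j+k)}(\x)$ and absorb the monomials into it. Factoring $u^{a_i+1}w^{-a_i}$ from row $i$, $w^{b_j}$ from column $j$, and one $r$ from each chosen entry gives
\[
r^l u^{\sum_i(a_i+1)}w^{\sum_i(b_i-a_i)}s_\lambda(\x)=\det_{i\in\mathbf a,\,j\in\mathbf b}\big(r\,u^{i+1}w^{j-i}s_{(i\mid j+k)}(\x)\big)=\det\big(B[\mathbf a,\mathbf b]\big),
\]
the minor on rows $\mathbf a$, columns $\mathbf b$ of the matrix $B=(B_{ij})$, $B_{ij}=r u^{i+1}w^{j-i}s_{(i\mid j+k)}(\x)$. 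Using $\sum_{j=0}^{n-2}(j+1)=\binom n2$, the $v$-factor becomes $v^{\binom n2-\sum_i(b_i+1)}=\prod_{j\notin\mathbf b}v^{j+1}$.

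On the other side, I would write the matrix in \eqref{eq: Ank via det} as $A+B$ with $A_{ij}=(-1)^{j-i}v^{j+1}\binom ij$, expand $\det(A+B)$ by multilinearity in the columns, and Laplace-expand along the columns taken from $B$, obtaining $\det(A+B)=\sum_{\mathbf a,\mathbf b}(-1)^{\sum\mathbf a+\sum\mathbf b}\det(B[\mathbf a,\mathbf b])\,\det(A[\overline{\mathbf a},\overline{\mathbf b}])$, where $\overline{\phantom a}$ is the complement in $\{0,\dots,n-2\}$. The crux is to evaluate the complementary minor of $A$. Writing $A=v\,P^{-1}\Lambda$ with $P=(\binom ij)$ the unitriangular Pascal matrix, $P^{-1}=((-1)^{i-j}\binom ij)$, and $\Lambda=\diag(v^0,\dots,v^{n-2})$, the diagonal $\Lambda$ peels off exactly the column factors:
\[
\det\big(A[\overline{\mathbf a},\overline{\mathbf b}]\big)=\Big(\prod_{j\notin\mathbf b}v^{j+1}\Big)\det\big(P^{-1}[\overline{\mathbf a},\overline{\mathbf b}]\big).
\]
Since $\det P=1$, Jacobi's complementary-minor identity turns the minor of $P^{-1}$ into a sign times the complementary minor $\det(P[\mathbf b,\mathbf a])=\det_{i,j}\binom{b_i}{a_j}$ of $P$, reproducing precisely the factor $\prod_{j\notin\mathbf b}v^{j+1}\det_{1\le i,j\le l}\binom{b_i}{a_j}$ appearing in the first paragraph (the transpose relating $\binom{b_i}{a_j}$ to $\binom{b_j}{a_i}$ is harmless).

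Comparing the two expansions term by term then finishes the proof. I expect the main obstacle to be the sign bookkeeping: the Laplace sign $(-1)^{\sum\mathbf a+\sum\mathbf b}$ must cancel Jacobi's sign $(-1)^{\sum\overline{\mathbf a}+\sum\overline{\mathbf b}}$. Their product is $(-1)^{2\binom{n-1}2}=+1$ because $\sum\mathbf a+\sum\overline{\mathbf a}=\sum\mathbf b+\sum\overline{\mathbf b}=\binom{n-1}{2}$, so each $(\mathbf a,\mathbf b)$-term carries coefficient $+1$, exactly as required; keeping the $0$- versus $1$-indexing consistent between the Laplace and Jacobi conventions is the only genuinely delicate point. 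The remaining steps—justifying the extension of the summation range, checking that the hooks $(a_i\mid b_j+k)$ are legitimate so Giambelli applies, and verifying the base case $n=2$ (where the $l=0$ term reproduces $\det A=v^{\binom n2}$)—are routine.
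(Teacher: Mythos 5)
Your proof is correct and follows essentially the same route as the paper: both expand the determinant as a sum over pairs of subsets recording where the $r$-term is selected, evaluate the selected minor via Giambelli after factoring out the monomials in $r,u,w$, evaluate the complementary minor as a power of $v$ times a binomial minor of the Pascal matrix, and match terms with Proposition~\ref{prop: TSSPP refinement}. The only cosmetic difference is that you obtain the complementary-minor evaluation (and the sign cancellation) from Jacobi's identity applied to the Pascal matrix, whereas the paper performs the Leibniz expansion by hand with explicit sub-permutations and invokes its identity~\eqref{eq: det of complement}, which it had derived earlier from the complementation involution on TSPPs.
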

\begin{proof}
We denote by $[0,n] = \{0,1,\ldots,n\}$. Expanding the determinant by the Leibniz formula yields
\begin{multline}
\label{eq: Ank via det proof 1}
\sum_{\sigma \in S_{[0,n-2]}} \sgn(\sigma) \prod_{i=0}^{n-2} \left((-1)^{\sigma(i)-i}  v^{\sigma(i)+1} \binom{i}{\sigma(i)}+r u^{i+1} w^{\sigma(i)-i} 
s_{(i|\sigma(i)+k)}(\x) \right)\\
= \sum_{A \subseteq [0,n-2]} \sum_{\sigma \in S_{[0,n-2]}} \sgn(\sigma) \prod_{i \in A}\left( r u^{i+1} w^{\sigma(i)-i}  s_{(i|\sigma(i)+k)}(\x) \right)\\ \times
\prod_{i \in [0,n-2]\setminus A} \left((-1)^{\sigma(i)-i}  v^{\sigma(i)+1} \binom{i}{\sigma(i)}\right).
\end{multline}
For $A =\{a_1,\ldots,a_l\} \subseteq [0,n-2]$ with $a_1> \ldots >a_l$ denote\footnote{The notation is used in a similar way as in \eqref{eq: Frobenius coef of complement}.} by $B^c=\{b_1^c,\ldots,b^c_{n-1-l}\} = [0,n-2] \setminus A$ the complement of $A$ where $b^c_1 > \ldots > b^c_{n-1-l}$. For a permutation $\sigma \in S_{[0,n-2]}$ denote by $b_1 > \ldots > b_l$ the elements of the image of $A$ and by $a^c_1 > \ldots > a^c_{n-1-l}$ the elements of the image of $B^c$. Define $\pi \in S_l$ and $\tau \in S_{n-l-1}$ via
$\sigma(a_i) = b_{\pi(i)}$ and $\sigma(b^c_i) =a^c_{\tau(i)}$. It is not complicated to see that the sign of $\sigma$ is given by 
\[
\sgn(\sigma)=\sgn(\pi)\sgn(\tau) \prod_{i \in [0,n-2]\setminus A}(-1)^{\sigma(i)-i}.
\]
For a given set $A \subseteq [n-1]$, the permutation $\sigma$ is uniquely determined by $\{b_1,\ldots,b_l\}$ and the permutations $\pi,\tau$. Note that $\la=(a_1,\ldots,a_l|b_1,\ldots,b_l)$ yields a partition inside $(n-1)^{n-1}$. Hence we can rewrite \eqref{eq: Ank via det proof 1} as 
\begin{multline*}
\sum_{\lambda=(a_1,\ldots,a_l|b_1,\ldots,b_l)\subseteq (n-1)^{n-1}} \,\, \sum_{\pi \in S_l} \,\, \sum_{\tau \in S_{n-l-1}} \sgn(\pi) \sgn(\tau) \\
\times \prod_{i=1}^l r u^{a_i+1} w^{b_{\pi(i)}-a_i}  s_{(a_i|b_{\pi(i)}+k)}(\x)
\prod_{i=1}^{n-l-1}  v^{a^c_{\tau(i)}+1} \binom{b^c_i}{a^c_{\tau(i)}} \\
= \sum_{\lambda=(a_1,\ldots,a_l|b_1,\ldots,b_l)\subseteq (n-1)^{n-1}}
r^l u^{\sum_{i=1}^l (a_i+1)} v^{\binom{n}{2}-\sum_{i=1}^l (b_i+1)} w^{\sum_{i=1}^l (b_i-a_i)}  \\
\times \det_{1 \leq i,j \leq l}\left( s_{(a_i|b_j+k)}(\x)\right)
\det_{1 \leq i,j \leq n-l-1}\left( \binom{b^c_i}{a^c_j} \right),
\end{multline*}
where we used $\sum_{i=1}^{n-l-1}(a_i^c+1) +\sum_{i=1}^l (b_i+1) = \binom{n}{2}$ in the last step.
Using \eqref{eq: det of complement} and the Giambelli identity which states 
\[
s_{(a_1,\ldots,a_l|b_1+k,\ldots,b_l+k)}(\x)= \det_{1 \leq i,j \leq l}\left( s_{(a_i|b_j+k)}(\x)\right),
\]
we can rewrite the above as
\begin{multline*}
\sum_{\lambda=(a_1,\ldots,a_l|b_1,\ldots,b_l)\subseteq (n-1)^{n-1}}
r^l u^{\sum_{i=1}^l (a_i+1)}  v^{\binom{n}{2}-\sum_{i=1}^l (b_i+1)} w^{\sum_{i=1}^l (b_i-a_i)} \\
\times s_{(a_1,\ldots,a_l|b_1+k,\ldots,b_l+k)}(\x)
\det_{1 \leq i,j \leq l}\left( \binom{b_i}{a_j} \right),
\end{multline*}
which is equal to $\A_{n,k}(r, u,v,w;\x)$ by Proposition \ref{prop: TSSPP refinement}.
\end{proof}


\section{Proof of Theorem \ref{thm: main thm 2}}
\label{sec: proof of mt 2}

Using the hook-content formula, we can express the evaluation of the Schur polynomial $s_{(a|b)}(\x)$ at $x_i=1$ as
\[
\left.s_{(a|b)}(x_1,\ldots,x_{n+k-1})\right|_{x_i=1}=\binom{n+k-1+a}{a+b+1}\binom{a+b}{a}.
\]
Together with Lemma \ref{lem: Ank via det} we obtain
\begin{equation}
\label{eq: Ank x=1}
\A_{n,k}(r, u, v, w;\mathbf{1}) = \det_{0 \leq i,j \leq n-2}\left((-1)^{j-i}  v^{j+1} \binom{i}{j}+ r u^{i+1} w^{j-i}  \binom{n+k+i-1}{i+j+k+1}\binom{i+j+k}{i} \right).
\end{equation}

We also need the following transformation identity for a binomial sum for the proof of Theorem \ref{thm: main thm 2}.

\begin{lem}
\label{lem: binomial identity}
Let $a,b,c$ be non-negative integers with $a,c \leq b$ and $x$ a variable, then
\begin{equation}
\label{eq: binomial identity}
\sum_{l=0}^b \binom{l}{c}\binom{x+l}{l-a} = \sum_{s=0}^c \binom{x+b+s+1}{b-a} \binom{x+a+s}{s} \binom{x+c}{c-s} (-1)^{c-s}.
\end{equation}
\end{lem}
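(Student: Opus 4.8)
The plan is to prove \eqref{eq: binomial identity} by induction on $b$, regarding both sides as polynomials in $x$ while keeping $a,c$ fixed. Write $F(b)$ and $G(b)$ for the left- and right-hand sides. The left-hand side telescopes by inspection: $F(b)-F(b-1)=\binom{b}{c}\binom{x+b}{b-a}$, and $F(-1)=0$ because the sum is empty. The same boundary value holds for the right-hand side, $G(-1)=0$, since at $b=-1$ the $b$-dependent factor becomes $\binom{x+s}{-1-a}$, which vanishes for every $s$. Consequently it suffices to show that $G$ obeys the identical first-order recurrence $G(b)-G(b-1)=\binom{b}{c}\binom{x+b}{b-a}$; summing this common recurrence from $b=0$ against the common initial value then yields $F(b)=G(b)$, which is exactly the claim.

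To produce the recurrence for $G$, I would compute $G(b)-G(b-1)$. Only the factor $\binom{x+b+s+1}{b-a}$ depends on $b$, so Pascal's rule $\binom{x+b+s+1}{b-a}=\binom{x+b+s}{b-a}+\binom{x+b+s}{b-a-1}$ together with $\binom{x+b+s}{b-1-a}=\binom{x+b+s}{b-a-1}$ collapses the difference of the two $b$-dependent binomials to the single term $\binom{x+b+s}{b-a}$. This reduces everything to the closed-form evaluation
\[
\sum_{s=0}^{c}(-1)^{c-s}\binom{x+b+s}{b-a}\binom{x+a+s}{s}\binom{x+c}{c-s}=\binom{b}{c}\binom{x+b}{b-a},
\]
which I will refer to as $(\star)$.

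The proof of $(\star)$ proceeds by recognizing its left-hand side as a terminating Gaussian hypergeometric series at argument $1$. Writing $t_s$ for the summand, a direct computation of the term ratio gives $t_{s+1}/t_s=\dfrac{(s-c)(s+x+b+1)}{(s+1)(s+x+1)}$, with $t_0=(-1)^c\binom{x+b}{b-a}\binom{x+c}{c}$, so the sum equals $t_0\cdot{}_2F_1(-c,\,x+b+1;\,x+1;\,1)$. The Chu--Vandermonde summation evaluates this to $t_0\,\dfrac{(-b)_c}{(x+1)_c}$, and the Pochhammer bookkeeping $(-b)_c=(-1)^c\,b!/(b-c)!$ and $\binom{x+c}{c}/(x+1)_c=1/c!$ makes the two signs $(-1)^c$ cancel and simplifies the product to precisely $\binom{b}{c}\binom{x+b}{b-a}$, as needed.

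The only genuine obstacle is this hypergeometric step: one must read off the correct parameters of the ${}_2F_1$ from the term ratio and then carry out the Chu--Vandermonde simplification without sign or factorial errors. Everything else is bookkeeping. A pleasant feature of running the argument at the level of polynomials in $x$ is that one never has to track separately the ranges in which individual binomial coefficients vanish; in particular the boundary evaluations $F(-1)=G(-1)=0$ need no case analysis, and $(\star)$ holds as a polynomial identity for all $a,b,c\ge 0$ irrespective of whether $a\le b$.
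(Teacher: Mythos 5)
Your proof is correct, and it takes a genuinely different route from the one in the paper. The paper establishes \eqref{eq: binomial identity} in one shot: it writes the left-hand side as a terminating ${}_3F_2$ at argument $1$ and applies two ${}_3F_2$ transformation formulas (one from Gasper--Rahman, one from Bailey) to land directly on the right-hand side. You instead induct on $b$: both sides vanish at $b=-1$, the left-hand side trivially satisfies $F(b)-F(b-1)=\binom{b}{c}\binom{x+b}{b-a}$, and Pascal's rule collapses the corresponding difference on the right to the single-sum identity $(\star)$, which you recognize as $t_0\cdot{}_2F_1(-c,\,x+b+1;\,x+1;\,1)$ and evaluate by Chu--Vandermonde. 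I verified the term ratio, the value of $t_0=(-1)^c\binom{x+b}{b-a}\binom{x+c}{c}$, and the Pochhammer bookkeeping; all are correct, and the telescoping and boundary evaluations are sound under the convention $\binom{y}{k}=0$ for negative integer $k$. The trade-off is that your argument needs only the most classical summation theorem rather than ${}_3F_2$ transformations, at the cost of an extra inductive layer. One small point worth a sentence in a write-up: the induction passes through values $b'<a$ (and $b'<c$), where the hypergeometric identification of $(\star)$ degenerates because $t_0$ vanishes identically; there $(\star)$ still holds simply because every term on the left and the product on the right contain a binomial coefficient with negative lower index (respectively the factor $(-b')_c=0$ matched by $\binom{b'}{c}=0$), so both sides are zero. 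This is harmless and your polynomial-in-$x$ framing otherwise does exactly what you claim.
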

\begin{proof}
Using hypergeometric notation, we can rewrite the left-hand side as
\[
\pFq{3}{2}{c-b,1,a-b}{-b,-b-x}{1}\binom{b}{c}\binom{x+b}{x+a}.
\]
We apply the ${}_3F_2$-series transformation \cite[(3.1.1)]{GasperRahman90}
\[
\pFq{3}{2}{a,b,-n}{d,e}{1} = \pFq{3}{2}{d-a,b,-n}{d,1+b-e-n}{1}\frac{(e-b)_n}{(e)_n},
\]
and obtain
\[
\pFq{3}{2}{-c,1,a-b}{-b,2+a+x}{1}\binom{b}{c}\binom{x+b+1}{x+a+1}.
\]
By further applying the terminating form of the ${}_3F_2$-series transformation \cite[Ex. 7, p. 98]{Bailey35}
\[
 \pFq{3}{2}{-n,a,b}{d,e}{1} = \pFq{3}{2}{-n,e-a,e-b}{e,d+e-a-b}{1} \frac{(d+e-a-b)_n}{(d)_n},
\]
we have
\[
\pFq{3}{2}{-c,1+a+x,2+b+x}{2+a+x,1+x}{1}\binom{x+c}{c}\binom{x+b+1}{x+a+1}(-1)^c,
\]
which is the right-hand side of \eqref{eq: binomial identity} expressed as a hypergeometric series.
\end{proof}

\subsection{Proof of \eqref{eq: Ank CSSPP 1}}
\begin{proof}
The assertion follows from the matrix identity
\begin{multline*}
\left(\binom{i}{j}t^{j+1-n}(t+1)^{i-j}(t+2)^{n-i-1} \right)_{0 \leq i,j \leq n-1}\\ \cdot
\left((-1)^{i+j}\binom{i}{j}+rt^{j-i}\binom{n+i}{n-j-1}\binom{i+j}{i} \right)_{0 \leq i,j \leq n-1}\\
=
\left(\delta_{i,j} + r\sum_{l=0}^{n-1} \binom{i}{l}\binom{j}{l}(t+2)^{j-l} \right)_{0 \leq i,j \leq n-1} \cdot
\left(\binom{i}{j}t^{j+1-n}(t+2)^{n-i-1} \right)_{0 \leq i,j \leq n-1}.
\end{multline*}
Indeed, the first and fourth matrices are lower triangular matrices and their corresponding determinants are both equal to $\prod_{i=0}^{n-1} t^{i+1-n}(t+2)^{n-i-1}$. The determinant of the second matrix is equal to $\A_{n+1,0}(r,1,1,t;\mathbf{1})$ by \eqref{eq: Ank x=1} and the determinant of the third matrix is equal to $\CSSPP_{n,0}(r,t+2)$ by Proposition \ref{prop: CSSPP gen fct}. Hence the assertion follows by taking determinants on both sides of the matrix identity. \bigskip

To show the above matrix identity, we first use matrix multiplication and obtain for the $(i,j)$-th entry
\begin{multline}
\label{eq: matrix id I 2}
\sum_{l=0}^{n-1} \binom{i}{l}t^{l+1-n}(t+1)^{i-l}(t+2)^{n-i-1} \left((-1)^{l+j}\binom{l}{j} +r t^{j-l}\binom{n+l}{n-j-1}\binom{l+j}{l} \right) \\
=\sum_{s=0}^{n-1}\left(\delta_{i,s} +r \sum_{l=0}^{n-1}\binom{i}{l}\binom{s}{l}(t+2)^{s-l} \right)
\binom{s}{j}t^{j+1-n}(t+2)^{n-s-1}.
\end{multline}
The sum over terms not involving the variable $r$ on the left-hand side of \eqref{eq: matrix id I 2} is
\[
\sum_{l=0}^{n-1}(-1)^{l+j}\binom{i}{l}\binom{l}{j}t^{l+1-n}(t+1)^{i-l}(t+2)^{n-i-1}.
\]
By using $\binom{i}{l}\binom{l}{j}=\binom{i}{j}\binom{i-j}{i-l}$ and the binomial theorem, we can rewrite the above sum as
\[
\binom{i}{j} t^{j+1-n}(t+2)^{n-i-1}\sum_{l=0}^{n-1} \binom{i-j}{i-l} (-t)^{(i-j)-(i-l)}(t+1)^{i-l} =\binom{i}{j} t^{j+1-n}(t+2)^{n-i-1},
\]
which is equal to the $r$-free term on the right-hand side of \eqref{eq: matrix id I 2}.
The sum over the terms of the right-hand side of \eqref{eq: matrix id I 2} involving the variable $r$ is equal to
\[
r t^{j+1-n}\sum_{l=0}^{n-1} \binom{i}{l} (t+2)^{n-l-1} \sum_{s=0}^{n-1}\binom{s}{l}\binom{s}{j},
\]
where we interchanged the order of the summation. Using Lemma \ref{lem: binomial identity} for the sum over $s$ with $a=j$, $b=n-1$, $c=l$, $x=0$, we obtain
\[
r t^{j+1-n}\sum_{l=0}^{n-1} \binom{i}{l} (t+2)^{n-l-1} \sum_{s=0}^{n-1} \binom{n+s}{n-1-j}\binom{s+j}{s}\binom{l}{l-s}(-1)^{l-s},
\]
where the upper bound of the second sum can be changed to $n-1$, since the last binomial coefficient is $0$ for $l < s \leq n-1$. Interchanging the sums again and using the binomial theorem yields
\begin{multline*}
r t^{j+1-n} \sum_{s=0}^{n-1} \binom{n+s}{n-1-j}\binom{s+j}{s} \sum_{l=0}^{n-1} \binom{i}{l}\binom{l}{l-s} (t+2)^{n-l-1} (-1)^{l-s}\\
= r t^{j+1-n}(t+2)^{n-i-1}  \sum_{s=0}^{n-1} \binom{i}{s}\binom{n+s}{n-1-j}\binom{s+j}{s} (t+1)^{i-s},
\end{multline*}
which is equal to the terms of the left-hand side of \eqref{eq: matrix id I 2} involving the variable $r$.
\end{proof}

\subsection{Proof of \eqref{eq: Ank CSSPP 2}}
\begin{proof}
By factoring out $(-1)^{i+j}$ in the determinant expression of $\A_{n+1,k}(r,1,1,-1;\mathbf{1})$ in \eqref{eq: Ank x=1}, we obtain
\[
\A_{n+1,k}(r,1,1,-1;\mathbf{1})=\det_{0 \leq i,j \leq n-1} \left( \binom{i}{j} +r \binom{n+k+i}{n-j-1}\binom{i+j+k}{i} \right).
\]
Using the Chu-Vandermonde identity, the determinant for $\CSSPP_{n,2k}(r,1)$ in Proposition \ref{prop: CSSPP gen fct} simplifies to
\[
\CSSPP_{n,2k}(r,1) = \det_{0 \leq i,j \leq n-1}\left( \delta_{i,j} + r\binom{2k+i+j}{j} \right).
\]
We claim the following matrix identity
\begin{multline*}
\left(  \binom{j}{i} +r \binom{n+k+j}{n-i-1}\binom{i+j+k}{j}  \right)_{0 \leq i,j \le n-1}
\cdot \left( \binom{k+j-i-1}{j-i} \right)_{0 \leq i,j \le n-1}  \\
=
\left(\binom{k+j}{j-i} \right)_{0 \leq i,j \le n-1} \cdot
\left( \delta_{i,j} + r\binom{2k+i+j}{j}\right)_{0 \leq i,j \le n-1}.
\end{multline*}
Since the second and third matrices are upper triangular with determinant equal to $1$, the assertion \eqref{eq: Ank CSSPP 2} follows by taking the determinant of all matrices in the above identity.\bigskip

To prove the above matrix identity we use matrix multiplication and obtain for the $(i,j)$-th term
\begin{multline}
\label{eq: matrix id II 2}
\sum_{l=0}^{n-1} 
\left(  \binom{l}{i} +r \binom{n+k+l}{n-i-1}\binom{i+l+k}{l}  \right)\binom{k+j-l-1}{j-l}\\
= \sum_{l=0}^{n-1} \binom{k+l}{l-i}
\left( \delta_{l,j} + r\binom{2k+l+j}{j}\right).
\end{multline}
The Chu-Vandermonde identity implies 
\[
\sum_{l=0}^{n-1} \binom{k+j-l-1}{j-l}\binom{l}{i} = \binom{k+j}{j-i},
\]
which explains the terms of \eqref{eq: matrix id II 2} not involving the variable $r$.
By setting $l=L-(2k+j)$, the coefficient of $r$ on the right-hand side of \eqref{eq: matrix id II 2} is equal to
\[
\sum_{L=2k+j}^{2k+j+n-1}\binom{L-(j+k)}{L-(2k+j+i)}\binom{L}{j}.
\]
We can actually change the lower bound of the sum to $0$ since the first binomial coefficient is equal to $0$ for $0 \leq L < 2k+j$.
Using Lemma \ref{lem: binomial identity} for $a=2k+j+i$, $b=2k+j+n-1$, $c=j$, and $x=-(j+k)$ as well as $\binom{-k}{j-s}(-1)^{j-s}=\binom{j+k-s-1}{j-s}$ yields the coefficient of $r$ of the left-hand side of \eqref{eq: matrix id II 2}.
\end{proof}

\subsection{Proof of \eqref{eq: Ank CSSPP 3}}
\begin{proof}
We expand the determinant for $\A_{n+1,k}(r,1,1,t;\mathbf{1})$ in \eqref{eq: Ank x=1} by the Leibniz formula and obtain
\begin{multline*}
\A_{n+1,k}(r,1,1,t;\mathbf{1}) \\
= \sum_{\sigma \in S_{[0,n-1]}} \sgn(\sigma) \prod_{i=0}^{n-1}
\left( (-1)^{\sigma(i)-i}\binom{i}{\sigma(i)}+r t^{\sigma(i)-i}\binom{n+k+i}{n-\sigma(i)-1}\binom{i+\sigma(i)+k}{i}\right) \\
= \sum_{\sigma \in S_{[0,n-1]}} \sgn(\sigma) \sum_{I \subseteq [0,n-1]}
\prod_{i \in I} \left( r t^{\sigma(i)-i} \binom{n+k+i}{n-\sigma(i)-1}\binom{i+\sigma(i)+k}{i}\right)\\
\times \prod_{i \in [0,n-1]\setminus I} (-1)^{\sigma(i)-i}\binom{i}{\sigma(i)}.
\end{multline*}
Now note that the summand is $0$ unless $\sigma(i) \le i$ for all $i \in [0,n-1] \setminus I$, and, therefore, we restrict our sum to such $I$. The power of $t$ is $\sum_{i \in I} (\sigma(i)-i)$, which is non-negative as 
$\sum_{i \in [0,n-1]} (i-\sigma(i))=0$ and $\sum_{i \in [0,n-1] \setminus I} (\sigma(i)-i) \le 0$, and, therefore, we can now set $t=0$. However, after this specialisation, the summand is zero unless $\sum_{i \in I} (\sigma(i)-i)=0$, and, therefore, $\sum_{i \in [0,n-1] \setminus I} (\sigma(i)-i) =0$, which implies $\sigma(i)=i$ for all $i \in [0,n-1] \setminus I$.
Hence, for $t=0$, the above simplifies to
\begin{multline*}
\A_{n+1,k}(r,1,1,0;\mathbf{1}) \\
= \sum_{\sigma \in S_{[0,n-1]}} \sgn(\sigma) \sum_{\substack{I \subseteq [0,n-1] \\ \{i: \sigma(i) \neq i\} \subseteq I }}
\prod_{i \in I} \left( r \binom{n+k+i}{n-\sigma(i)-1}\binom{i+\sigma(i)+k}{i}\right) \\
= \sum_{\sigma \in S_{[0,n-1]}} \prod_{i=0}^{n-1} \left(\delta_{i,\sigma(i)}+r \binom{n+k+i}{n-\sigma(i)-1}\binom{i+\sigma(i)+k}{i}\right)\\
= \det_{0 \leq,i,j \leq n-1}\left( \delta_{i,j} + r \binom{n+k+i}{n-j-1}\binom{i+j+k}{i}\right).
\end{multline*}
Taking the determinant of the following matrix identity implies the  assertion \eqref{eq: Ank CSSPP 3}, since the second and third matrix are upper triangular with determinant equal to $1$ and the determinant of the fourth matrix is equal to $\CSSPP_{n,k}(r,2)$ by Proposition \ref{prop: CSSPP gen fct}.
\begin{multline*}
\left( \delta_{i,j} + r \binom{n+k+j}{n-i-1}\binom{i+j+k}{j} \right)_{0 \leq i,j \leq n-1} \cdot
\left( \binom{k+j}{j-i} \right)_{0 \leq i,j \leq n-1} \\ =
\left( \binom{k+j}{j-i} \right)_{0 \leq i,j \leq n-1} \cdot
\left( \delta_{i,j} +r \sum_{l\geq 0} \binom{i}{l}\binom{j+k}{l+k}2^{j-l} \right)_{0 \leq i,j \leq n-1} .
\end{multline*}
In order to prove the matrix identity, it suffices to show
\begin{equation}
\label{eq: matrix id III 2}
\sum_{s=0}^{n-1} \binom{n+k+s}{n-i-1}\binom{i+s+k}{s} \binom{k+j}{j-s}
 = \sum_{s,l=0}^{n-1} \binom{k+s}{s-i}\binom{s}{l}\binom{j+k}{l+k}2^{j-l}.
\end{equation}
We can rewrite the right-hand side of \eqref{eq: matrix id III 2} by applying Lemma \ref{lem: binomial identity} for the sum over $s$ with $a=i$, $b=n-1$, $c=l$ and $x=k$ and obtain
\begin{multline*}
\sum_{l=0}^{n-1} \binom{j+k}{l+k}2^{j-l} \sum_{s=0}^{l}  \binom{k+n+s}{n-i-1}\binom{k+i+s}{s}\binom{k+l}{l-s}(-1)^{l-s}\\
= \sum_{s=0}^{n-1}\binom{k+n+s}{n-i-1}\binom{k+i+s}{s}  \sum_{l=0}^{n-1}\binom{j+k}{l+k}\binom{k+l}{l-s}2^{j-l}(-1)^{l-s},
\end{multline*}
where we interchanged the sums and changed the upper bound of the sum over $s$ to $n-1$ which is allowed since $\binom{k+l}{l-s}=0$ for $s >l$.
By using $\binom{j+k}{l+k}\binom{k+l}{l-s}=\binom{j+k}{j-s}\binom{j-s}{l-s}$ together with the binomial theorem, we obtain the left-hand side of \eqref{eq: matrix id III 2}.
\end{proof}

\section*{Acknowledgements}
The authors want to thank Fran\c{c}ois Bergeron, Matja{\v z} Konvalinka, Philippe Nadeau and Vasu Tewari for helpful discussions. 

\bibliographystyle{abbrv}
\bibliography{LiteraturListe}

\end{document}